\newcommand{\R}{\mathbb R}
\newcommand{\C}{\mathbb C}
\newcommand{\wt}[1]{\widetilde{#1}}
\newcommand{\jump}[1]{[\![#1]\!]}
\newcommand{\ave}[1]{\{\!\{#1\}\!\}}
\newcommand{\ntr}{\gamma_\nu}
\newcommand{\nd}{\partial_\nu}
\newcommand{\Gscr}{{\Gamma_{\mathrm{scr}}}}
\newcommand{\bff}[1]{{\mathbf{#1}}}
\renewcommand{\SS}{{\mathrm S}}
\newcommand{\DD}{{\mathrm D}}
\newcommand{\VV}{{\mathrm V}}
\newcommand{\WW}{{\mathrm W}}
\newcommand{\KK}{{\mathrm K}}
\newcommand{\II}{{\mathrm I}}
\newcommand{\CS}{{\mathcal S}}
\newcommand{\CD}{{\mathcal D}}
\newcommand{\CV}{{\mathcal V}}
\newcommand{\CW}{{\mathcal W}}
\newcommand{\CK}{{\mathcal K}}
\newcommand{\Rd}{{\R^d}}
\newcommand{\RdG}{{\R^d\setminus\Gamma}}
\newcommand{\divv}{{\mathrm{div}}}
\newcommand{\TD}{\mathrm{TD}}
\newcommand{\Hdiv}[1]{\mathbf H(\divv,#1)}
\newcommand{\Hhalf}{H^{1/2}(\Gamma)}
\newcommand{\Hmhalf}{H^{-1/2}(\Gamma)}
\newcommand{\Vh}{{\mathbf V_h}}
\definecolor{red}{rgb}{0.8,0,0} 
\newtheorem{proposition}{Proposition}[section]
\newtheorem{theorem}[proposition]{Theorem}
\numberwithin{equation}{section}
\title{A new and improved analysis of the time domain boundary integral operators  for acoustics}
\author{
Matthew Hassell,\footnote{E-mail: \href{mailto:mhassell@udel.edu}{\texttt{mhassell@udel.edu}}}
\and
Tianyu Qiu,\footnote{E-mail: \href{mailto:qty@udel.edu}{\texttt{qty@udel.edu}}}
\and
Tonatiuh S\'anchez-Vizuet,\footnote{E-mail: \href{mailto:tonatiuh@udel.edu}{\texttt{tonatiuh@udel.edu}}}
\and
Francisco--Javier Sayas\footnote{E-mail: \href{mailto:fjsayas@udel.edu}{\texttt{fjsayas@udel.edu}}, Partially funded by NSF (grant DMS 1216356).}
\and Department of Mathematical Sciences,
University of Delaware
}
\date{\today}
\begin{document}

\maketitle

\begin{abstract} We present a novel analysis of the boundary integral operators associated to the wave equation. The analysis is done entirely in the time-domain by employing tools from abstract evolution equations in Hilbert spaces and semi-group theory. We prove a single general theorem from which well-posedness and regularity of the solutions for several boundary integral formulations can be deduced as particular cases.  By careful choices of continuous and discrete spaces, we are able to provide a concise analysis for various direct and indirect formulations, both at the continuous level and for their Galerkin-in-space semi-discretizations.  Some of the results here are improvements on previously known results, while other results are equivalent to those in the literature.  The methodology presented here greatly simplifies the analysis of the operators of the Calder{\'o}n projector for the wave equation and can be generalized for other relevant boundary integral equations.  \\
{\bf AMS Subject classification.} 65R20, 65M38, 65J08\\
{\bf Key words.} Retarded boundary integral equations, Galerkin BEM, Abstract evolution equations.
\end{abstract}

\section{The context and the goals}

We present a new technique for direct-in-time analysis of the operators of the Calder{\'o}n projector for the acoustic wave equation. The analysis is carried out by first formulating the wave equation as a first-order-in-time-and-in-space transmission problem. We then show that this exotic transmission problem generates a strongly continuous group ($C_0$ group) of isometries in an appropriately chosen Hilbert space.  From this abstract dynamical system, we are able to derive stability and error estimates for a variety of transient scattering problems, both continuous and semi-discrete in space. This new technique offers a number of improvements over the Laplace domain analysis that originated in \cite{BaHa:1986a} and \cite{BaHa:1986b}, which carries out inversion using a Plancherel formula in anisotropic Sobolev spaces. Later work in \cite{Lubich:1994} used the Laplace domain method and derived time-domain estimates by inversion of the Laplace transform. The Laplace domain analysis was given a systematic treatment in \cite{LaSa:2009a} for acoustic waves, and has been applied to numerous other problems, such as electromagnetic scattering \cite{BaBaSaVe:2013}, electromagnetic transmission \cite{ChMo:2015}, and wave-structure interaction \cite{HsSaSa:2015}. A detailed outline of the Laplace domain analysis of transient acoustic scattering can be found in the first part of \cite{Sayas:2014}.    

The direct-in-time study of the acoustic Calder{\'o}n projector began in \cite{Sayas:2013d} and was detailed in the second part of \cite{Sayas:2014}, employing a second order (in time and in space) equation approach, namely, the problems were rewritten as a second-order-in-time differential equation associated to an unbounded (second order differential) operator in the space variables. This approach later proved to be inflexible for the treatment of Maxwell equations, this lead to the use of semigroup theory in \cite{QiSa:2015}, greatly simplifying the analysis and sidestepped the cut-off process and reconciliation step described in \cite{Sayas:2014, BaLaSa:2015, QiSa:2014}. Moreover, the estimates obtained with the direct-in-time analysis are sharper than those obtained through Laplace domain analysis. In particular, the dependence on time is made explicit and the temporal regularity for the input data is lowered. We will remark on such improvements in the course of this article.

We present here a single theorem that covers all of the possible problems of interest as special cases. By choosing the appropriate spaces we are able to systematically derive estimates for the time domain layer potentials, time domain boundary integral operators, time domain DtN/NtD maps, semi-discrete Galerkin solver and error operators for direct/indirect/symmetric formulations for interior/exterior Dirichlet/Neumann equations. The theory also covers screens and mixed boundary conditions without any modification.  We are hopeful that this is the final ``big theorem" which unites all of the previously developed direct-in-time analysis and that a more general framework will not be needed in the future.

The paper slowly builds up the required material in order to state and prove the abstract theorem and then proceeds to apply it for specific cases. It is organized as follows. In Section \ref{sec:2}, we introduce the background material on Sobolev spaces, the potentials and operators for the acoustic wave equation, and their mapping properties. Section  \ref{sec:3} builds the key theorems on an abstract evolution equations on a Hilbert space from which all of the main results will follow. Section \ref{sec:4} applies the previous result to a particular dynamical system that arises from our study of the acoustic wave equation in an abstract setting. We then formulate the various integral representations as a single exotic transmission problem from which all of the specific formulations follow via careful choices of spaces and data. Section \ref{sec:5} is a summary of the estimates that follow from the theorems in Section \ref{sec:4}. We conclude by pointing at some possible extensions.

\paragraph{Background.} Section \ref{sec:2} gives a fast introduction to the PDE and distribution theory background needed for this paper. All ideas on Sobolev spaces and steady-state layer potentials on Lipschitz domains can be found in McLean's monograph on elliptic systems \cite{McLean:2000}. For necessary results on vector-valued distributions and their Laplace transforms we refer to the Dautray-Lions encyclopedia \cite{DaLi:1992}. A compendium of what is needed in the context of time domain integral equations can be found in \cite{Sayas:2014}. Finally, some basic results on semigroups of operators will be used: the most elementary ones can be easily found in functional analysis textbooks \cite{Kesavan:1989}, while the results on the behaviour of nonhomogeneous can be found in Pazy's well known monograph \cite{Pazy:1983}.

\section{The materials}\label{sec:2}

This paper is a compendium of new and old techniques that build on a relatively vast body of knowledge. This section is devoted to introducing all the necessary tools to present the time domain integral operators for the wave equation.

The geometric setting of this paper is as follows. The open set $\Omega_-\subset \Rd$ is the union of a finite collection of bounded open sets $\Omega_i$ ($i=1,\ldots,N$) with connected Lipschitz boundaries. We assume that the closures of the components $\Omega_i$ do not intersect. We write $\Gamma:=\partial \Omega_-=\cup_{i=1}^N \partial\Omega_i$ and $\Omega_+:=\Rd\setminus\overline{\Omega_-}$.

\paragraph{Sobolev space notation. }
Given an open set $\mathcal O$ (in this paper $\mathcal O\in \{\Rd,\RdG,\Omega_+,\Omega_-\}$), we denote
\[
(u,v)_{\mathcal O}:=\int_{\mathcal O} u\,v, \qquad (\bff u,\bff v)_{\mathcal O}:=\int_{\mathcal O}\bff u\cdot\bff v.
\]
This is the inner product of $L^2(\mathcal O)$ and $\bff L^2(\mathcal O)$ in the real case. In the complex case, the bracket will still be linear and we will need to conjugate to get the inner product. We also denote
\[
\| u\|_{\mathcal O}:=\sqrt{(u,\overline{u})_{\mathcal O}},
\qquad
\|\bff u\|_{\mathcal O}:=\sqrt{(\bff u,\overline{\bff u})_{\mathcal O}}.
\]
The space $H^1(\mathcal O)$ is the standard Sobolev space and $\Hdiv{\mathcal O}:=\{ \bff v\in \bff L^2(\mathcal O)\,:\, \nabla\cdot\bff v\in L^2(\mathcal O)\}$.
The $H^1(\mathcal O)$ norm is denoted $\|\cdot\|_{1,\mathcal O}$ and the $\Hdiv{\mathcal O}$ norm is denoted $\|\cdot\|_{\divv,\mathcal O}$. For Lipschitz boundaries we consider the trace space $\Hhalf$ and denote by $\Hmhalf$ the  representation of its dual space obtained when the dual of $L^2(\Gamma)$ is identified with itself. The duality product $\Hmhalf\times \Hhalf$ will be denoted with angled brackets $\langle\cdot,\cdot\rangle_\Gamma$, linear in both components.

\paragraph{Traces.}
The following trace operators
\[
\gamma^\pm : H^1(\RdG) \to \Hhalf
\qquad
\gamma : H^1(\Rd)\to \Hhalf,
\]
are bounded and surjective. Given $u\in H^1(\RdG)$ we will denote
\[
\jump{\gamma u}:=\gamma^- u-\gamma^+ u,
\qquad
\ave{\gamma u}:=\tfrac12(\gamma^-u+\gamma^+u).
\]
The normal components for $\bff v\in \Hdiv{\Omega_\pm}$ are elements $\ntr^\pm \bff v\in \Hmhalf$ satisfying
\begin{eqnarray*}
\langle\ntr^-\bff v,\gamma^-w\rangle_\Gamma =(\nabla\cdot\bff v,w)_{\Omega_-}+(\bff v,\nabla w)_{\Omega_-}
& &  \forall w\in H^1(\Omega_-),\\
\langle\ntr^+\bff v,\gamma^+w\rangle_\Gamma =-(\nabla\cdot\bff v,w)_{\Omega_+}-(\bff v,\nabla w)_{\Omega_+}
& &  \forall w\in H^1(\Omega_+).
\end{eqnarray*}
We recall that $\ntr^\pm:\Hdiv{\Omega_\pm}\to \Hmhalf$ are surjective.
For $\bff v\in \Hdiv{\RdG}$ we can define
\[
\jump{\ntr\bff v}:=\ntr^-\bff v-\ntr^+\bff v,
\qquad
\ave{\ntr\bff v}:=\tfrac12 (\ntr^-\bff v+\ntr^+\bff v).
\]
When $\bff v\in \Hdiv{\Rd}$, we will write $\ntr\bff v=\ntr^\pm\bff v$. Finally, in the space
\[
H^1_\Delta(\Omega_\pm):=\{ u\in H^1(\Omega_\pm)\,:\,\nabla u\in \Hdiv{\Omega_\pm}\}
						=\{ u\in H^1(\Omega_\pm)\,:\, \Delta u\in L^2(\Omega_\pm)\},
\]
we can define $\nd^\pm u=\ntr^\pm\nabla u$. For $u\in H^1_\Delta(\RdG)$, we denote
\[
\jump{\nd u}:=\nd^-u-\nd^+u=\jump{\ntr\nabla u},\qquad
\ave{\nd u}:=\tfrac12(\nd^-u+\nd^+u)=\ave{\ntr\nabla u}.
\]
When $u\in H^1(\RdG)$ but $\nabla u\in \Hdiv{\Rd}$ we will write $\nd u=\nd^\pm u$.

\paragraph{Two remarks.} We will deal with evolution equations taking values in real Sobolev spaces. The complexifications of these spaces will appear when we take Laplace transforms. While Lebesgue integration over $\Rd$ and $\RdG$ is clearly the same, we will distinguish one set from the other when there is a differential operator in the integrand. For instance, $\|\nabla u\|_\RdG$ will be used for $u\in H^1(\RdG)$ and $\|\nabla u\|_{\Rd}$ will be used for $u\in H^1(\Rd)$. {\em Unless explicitly stated, all differential operators in the space variables, and the associated differential equations, will be assumed to be used in $\RdG$,}

\paragraph{Vector-valued distributions. }
Let $\mathcal D(\R)$ be the space of infinitely differentiable functions with compact support, endowed with its usual concept of convergence \cite{Schwartz:1966}.
Given a Banach space $X$, an $X$-valued distribution is a sequentially continuous linear map $f:\mathcal D(\mathbb R)\to X$, with the action of $f$ on $v\in \mathcal D(\mathbb R)$ denoted $\langle f,v\rangle_{\mathcal D'\times \mathcal D}$. A distribution is said to be causal when $\langle f,v\rangle_{\mathcal D'\times \mathcal D}=0$ whenever $\mathrm{supp}\, v\subset (-\infty,0)$. The derivative of a distribution $f$ is the distribution $\dot f$ given by $\langle \dot f,v\rangle_{\mathcal D'\times \mathcal D}=-\langle f,\dot v\rangle_{\mathcal D'\times \mathcal D}$. 

\begin{theorem}\label{th:FJS}
{\rm \cite[Chapter 3]{Sayas:2014}} Let $X$ be a Banach space and let $f$ be an $X$-valued distribution. The following statement on $f$
\begin{quote}
there exists a continuous function $g:\mathbb R\to X$ such that $g(t)=0$ for all $t\le 0$ and such that $\| g(t)\| \le C t^m$ for all $t\ge 1$ with $m\ge 0$, and there exists a non-negative integer $k$ such that $f=g^{(k)}$
\end{quote}
is equivalent to
\begin{quote}
$f$ admits a Laplace transform $\mathrm F=\mathcal L\{ f\}$ defined in $\mathbb C_+:=\{ s\in \mathbb C\,:\, \mathrm{Re}\,s>0\}$ and satisfying $\| \mathrm F(s)\|\le C_{\mathrm F}(\mathrm{Re}\,s) |s|^\mu$ for all $s\in \mathbb C_+$, where $\mu \in \mathbb R$ and $C_{\mathrm F}:(0,\infty)\to (0,\infty)$ is non-increasing and such that $C_{\mathrm F}(\sigma)\le C \sigma^{-\ell}$ for all $\sigma<1$ for some $C>0$ and $\ell \ge 0$.
\end{quote}
\end{theorem}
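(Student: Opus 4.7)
The statement is an equivalence between a time-domain structural description of $f$ and a frequency-domain growth estimate on its Laplace transform, so the plan is to prove both implications. The forward direction is a direct estimate; the reverse is a Paley--Wiener style inversion whose polynomial-growth bound is the delicate part.

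For the forward implication, I would assume $f = g^{(k)}$ with $g$ as described. Since $g$ is continuous and causal with $g(0)=0$, it is bounded on $[0,1]$ by some $M$, and $\|g(t)\| \le C t^m$ for $t \ge 1$. For $s \in \mathbb{C}_+$ with $\sigma = \mathrm{Re}\,s$, splitting $G(s) := \int_0^\infty e^{-st} g(t)\,dt$ at $t=1$ yields
\[
\|G(s)\| \;\le\; M\int_0^1 e^{-\sigma t}\,dt + C \int_1^\infty e^{-\sigma t} t^m\,dt \;\le\; C_G(\sigma),
\]
where $C_G(\sigma)$ is a non-increasing function behaving like $\sigma^{-(m+1)}$ for $\sigma<1$ and bounded for $\sigma \ge 1$. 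Because distributional differentiation corresponds to multiplication by $s$ on the Laplace side, $F(s) = s^k G(s)$ satisfies the required bound with $\mu = k$ and $\ell = m+1$.

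For the reverse implication, given the hypotheses on $F$, I would choose an integer $k$ large enough that $\mu - k < -1$ and set $G(s) := F(s)/s^k$, so that $\|G(s)\| \le C_F(\sigma)|s|^{\mu-k}$ is integrable along any vertical line in $\mathbb{C}_+$. I then define
\[
g(t) \;:=\; \frac{1}{2\pi i}\int_{\sigma-i\infty}^{\sigma+i\infty} e^{st} G(s)\,ds,
\]
which is independent of $\sigma>0$ by Cauchy's theorem. Uniform absolute convergence on compact $t$-intervals gives continuity of $g$; causality ($g(t)=0$ for $t<0$) follows by letting $\sigma \to +\infty$ (using that $C_F$ is non-increasing and $e^{\sigma t}\to 0$ exponentially when $t<0$); and since the Laplace transform of $g^{(k)}$ is $s^k G(s) = F(s)$, uniqueness of the Laplace transform for causal distributions on $\mathbb{C}_+$ identifies $f = g^{(k)}$.

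The real technical hurdle is the polynomial-growth bound on $g$, and this is where both ingredients of the hypothesis on $C_F$ come into play simultaneously. For $t \ge 1$, the naive choice of a fixed contour $\sigma$ produces unwanted exponential growth $e^{\sigma t}$, so I would instead tune $\sigma = 1/t \le 1$: then $e^{\sigma t} = e$, the hypothesis gives $C_F(1/t) \le C t^\ell$, and the scaling identity $\int_{-\infty}^\infty (\sigma^2+\omega^2)^{(\mu-k)/2}\,d\omega = C' \sigma^{\mu-k+1}$ (finite because $\mu-k+1 < 0$) produces
\[
\|g(t)\| \;\le\; \frac{e}{2\pi}\, C_F(1/t)\, C' \,\sigma^{\mu-k+1} \;\le\; C'' \, t^{\,\ell + k - \mu - 1},
\]
which is the claimed polynomial growth with exponent $m = \ell + k - \mu - 1$. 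The interplay between the contour optimization $\sigma = 1/t$ and the near-zero bound $C_F(\sigma) \le C\sigma^{-\ell}$ is the crux: it is precisely this balance that converts the exponential factor $e^{\sigma t}$ into the harmless constant $e$ while keeping the frequency-side cost polynomial in $t$.
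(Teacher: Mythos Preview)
The paper does not prove this theorem: it is stated with a citation to \cite[Chapter 3]{Sayas:2014} and used as background, so there is no in-paper proof to compare against. That said, your argument follows the standard route that the cited reference takes (Paley--Wiener inversion with contour optimization $\sigma=1/t$), and the steps are correct in outline; the only points you leave implicit are the analyticity of $F$ in $\mathbb C_+$ (needed for the contour-shift via Cauchy) and the verification that the Bromwich integral really returns a function whose Laplace transform is $G$, both of which are routine.
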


\paragraph{The TD class.}
Following \cite{Sayas:2014}, the set of all causal distributions characterized by Theorem \ref{th:FJS} will be denoted $\TD(X)$ (TD as in time-domain). Note that if $X$ and $Y$ are Hilbert spaces, $f\in \TD(X)$ and $A\in \mathcal B(X,Y)$, then $Af\in \TD(Y)$. In particular, if $X\subset Y$ with continuous embedding, $f\in \TD(X)$ implies that $f\in \TD(Y)$. When $f\in \TD(X)$, we will define $\partial^{-1}f\in \TD(X)$ by the equality $\mathcal L\{\partial^{-1} f\}(s)=s^{-1}\mathrm F(s)$. The operator $\partial^{-1}$ is a weak form of the causal antidifferentiation operator
\[
(\partial^{-1} f)(t)=\int_0^t f(\tau)\mathrm d\tau.
\]
For $f\in \mathcal C(\R_+;X)$, we define 
\[
Ef (t):=\left\{ \begin{array}{ll} f(t), & t\ge 0,\\
							0, & t< 0.
		\end{array}\right.
\]
If $\| f(t)\|\le C t^m$ for $t\ge 1$ and some non-negative integer $m$, then $Ef\in \TD(X)$. Also, if $f\in \mathcal C^1(\R_+;X)$ and $f(0)=0$, then
\[
\frac{\mathrm d}{\mathrm dt}(Ef)=E\dot f,
\]
where the derivative in the left-hand-side is in the sense of $X$-valued distributions, while the derivative in the right-hand-side is a classical derivative.
We will use the spaces
\[
\mathcal C^k_+(\R;X):=\{  f\in \mathcal C^k(\R;X)\,:\, f(t)=0 \quad t\le 0\},
\]
and
\[
W^k_+(\R;X):=\{ f\in \mathcal C^{k-1}_+(\R;X)\,:\, f^{(k)}\in L^1(\R;X)\}.
\]
Note that $W^k_+(\R;X)\subset \TD(X)$.

\paragraph{Laplace domain form of potentials and operators.}
For $s\in \C_+$, $\varphi\in \Hhalf$, $\lambda \in \Hmhalf$, the problem
\begin{alignat*}{6}
u\in H^1(\RdG)  \qquad & \Delta u-s^2 u=0 \qquad \mbox{in $\RdG$},\\
				 & \jump{\gamma u}=\varphi,\\
				 & \jump{\nd u}=\lambda,
\end{alignat*}
admits a unique solution. The variational formulation of this problem is
\begin{alignat*}{6}
u\in H^1(\RdG) \qquad & \jump{\gamma u}=\varphi,\\
				 & (\nabla u,\nabla v)_{\RdG}+s^2(u,v)_{\Rd}=\langle\lambda,\gamma v\rangle_\Gamma
						\quad \forall v\in H^1(\Rd).
\end{alignat*}
Its solution is denoted using two bounded linear operators
$u=\SS(s)\lambda-\DD(s)\varphi.$
By definition,
\[
\jump{\gamma}\SS(s)=0, 
	\qquad 
\jump{\nd}\SS(s) = \II,
	\qquad
\jump{\gamma}\DD(s)=-\II,
	\qquad
\jump{\nd}\DD(s)=0.
\]
We then define the four boundary integral operators
\begin{alignat*}{6}
& \VV(s)=\ave{\gamma}\SS(s)=\gamma^\pm \SS(s),
	&\qquad
		& \KK(s)=\ave{\gamma}\DD(s),\\
& \KK^t(s)=\ave{\nd}\SS(s),
	&\qquad
		&\WW(s)=-\ave{\nd}\DD(s)=-\nd^\pm \DD(s),
\end{alignat*}
and we have the limit relations
\[
\nd^\pm \SS(s)=\mp\tfrac12\II+\KK^t(s),
	\qquad
\gamma^\pm \DD(s)=\pm\tfrac12\II+\KK(s).
\]
The operators $\VV(s)$ and $\WW(s)$ are invertible. We will denote
\[
\VV^{-1}(s):=(\VV(s))^{-1}, 
	\qquad
\WW^{-1}(s):=(\WW(s))^{-1}.
\]

\begin{theorem}\label{th:bounds}
The following bounds hold for all $s\in \C_+$
\begin{alignat*}{6}
\| \SS(s)\|_{\Hmhalf\to H^1(\Rd)} &\le C \frac{|s|}{\sigma\underline\sigma^2}, \\
\| \DD(s)\|_{\Hhalf \to H^1(\RdG)} &\le C\frac{|s|^{3/2}}{\sigma\underline\sigma^{3/2}},\\
\| \VV(s)\|_{\Hmhalf\to\Hhalf}+\| \WW^{-1}(s)\|_{\Hmhalf\to\Hhalf}  & \le C \frac{|s|}{\sigma\underline\sigma^2},\\
\| \KK(s)\|_{\Hhalf\to\Hhalf}+\| \KK^t(s)\|_{\Hmhalf\to\Hmhalf}  & \le C\frac{|s|^{3/2}}{\sigma\underline\sigma^{3/2}},\\
\| \WW(s)\|_{\Hhalf\to\Hmhalf} + \| \VV^{-1}(s)\|_{\Hhalf\to\Hmhalf} & \le C\frac{|s|^2}{\sigma\underline\sigma}.
\end{alignat*}
In all of them we have denoted $\sigma:=\mathrm{Re}\,s$ and $\underline\sigma:=\min\{1,\sigma\}$.
\end{theorem}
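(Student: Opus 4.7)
The plan is to derive every entry in the table from a single energy estimate for the Laplace-domain transmission problem that defines $\SS(s)$ and $\DD(s)$, and then transfer this estimate to the boundary operators and their inverses by composing with continuous traces and normal traces. I would work throughout with the $s$-weighted norm $N(u)^2:=\|\nabla u\|^2_\RdG+|s|^2\|u\|^2_\Rd$, which is the natural coercivity norm for the bilinear form $(\nabla\cdot,\nabla\cdot)_\RdG+s^2(\cdot,\cdot)_\Rd$.

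Starting from the variational formulation, I would fix an $s$-dependent lifting $u_\varphi\in H^1(\RdG)$ of the Dirichlet jump so that $w:=u-u_\varphi\in H^1(\Rd)$ is an admissible test function. Testing the shifted identity with $v=(\bar s/|s|)w$ turns the left-hand side into $(\sigma/|s|)\,N(w)^2$ after taking real parts (using $\mathrm{Re}(\bar s/|s|)=\sigma/|s|$ and $\mathrm{Re}(s^2\bar s/|s|)=\sigma|s|$), while the right-hand side is bounded by Cauchy--Schwarz, the duality pairing $\Hmhalf\times\Hhalf$, and the unweighted trace bound $\|w\|_{1,\Rd}\le N(w)/\underline\sigma$. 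Dividing through and using $N(u)\le N(w)+N(u_\varphi)$ yields the master estimate
\[
N(u)\;\lesssim\;\frac{|s|}{\sigma\underline\sigma}\,\|\lambda\|_{-1/2}\;+\;\frac{|s|}{\sigma}\,N(u_\varphi).
\]
For the lifting itself I would use the $s$-weighted Helmholtz extension, namely the solution of $-\Delta u_\varphi+|s|^2u_\varphi=0$ in $\Omega_-$ with $\gamma^- u_\varphi=\varphi$, extended by zero to $\Omega_+$, which satisfies the sharp bound $N(u_\varphi)\lesssim(|s|/\underline\sigma)^{1/2}\|\varphi\|_{1/2}$; this half-power factor is precisely the source of the half-power gap between the $\SS$-type and $\DD$-type exponents in the theorem.

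From the master estimate, the $\SS(s)$ bound follows by setting $\varphi=0$ and using $\|u\|_{1,\Rd}\le N(u)/\underline\sigma$ to pick up the final $\underline\sigma^{-1}$, while the $\DD(s)$ bound follows by setting $\lambda=0$ and absorbing the lifting factor. The operators $\VV(s)=\gamma^\pm\SS(s)$ and $\KK(s)=\ave\gamma\DD(s)$ inherit their exponents from $\SS$ and $\DD$ because $\gamma^\pm\colon H^1(\RdG)\to\Hhalf$ is bounded independently of $s$. For $\KK^t(s)=\ave\nd\SS(s)$ and $\WW(s)=-\nd^\pm\DD(s)$ I would estimate the normal trace by dualizing its integration-by-parts definition together with $\Delta u=s^2 u$:
\[
|\langle\nd^\pm u,\gamma^\pm V\rangle_\Gamma|\le\|\nabla u\|_{\Omega_\pm}\|\nabla V\|_{\Omega_\pm}+|s|^2\|u\|_{\Omega_\pm}\|V\|_{\Omega_\pm}\le N(u)\,N(V),
\]
and take $V$ to be the same $s$-weighted Helmholtz lifting of its Dirichlet trace, which gives $\|\nd^\pm u\|_{-1/2}\lesssim N(u)\cdot(|s|/\underline\sigma)^{1/2}$. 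Applied to $\SS(s)\lambda$ this produces the $\KK^t$ bound with the $\DD$-type exponents, and applied to $\DD(s)\varphi$ it produces the $\WW$ bound with the doubly penalized $|s|^2/(\sigma\underline\sigma)$ exponents. Finally, $\VV^{-1}(s)$ and $\WW^{-1}(s)$ are recognized as the solution operators of the Dirichlet and Neumann problems $u\in H^1(\Rd)$ with $\gamma u=\psi$ (for $\VV^{-1}$) and $u\in H^1(\Omega_\pm)$ with $\nd^\pm u=-\eta$ (for $\WW^{-1}$); an essentially identical energy argument applies to each, and the resulting bounds pair by duality with those of $\WW$ and $\VV$ respectively.

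The main obstacle is the careful bookkeeping of the $|s|$, $\sigma$, and $\underline\sigma$ powers through the lifting and dual normal-trace steps. Every entry of the theorem has total exponent $a+b=3$ in the $(|s|^a,\underline\sigma^{-b})$ decomposition, and moving from one exponent pair to the next trades one half-power of $\underline\sigma^{-1}$ for one half-power of $|s|$; this trade is implemented either by the weighted lifting (first half-power) or by the dual normal trace combined with a second lifting (second half-power). Once the master estimate and the dual normal-trace estimate are in place, each of the claimed bounds reduces to a mechanical composition of bounded linear maps.
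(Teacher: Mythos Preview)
The paper does not actually prove this theorem: it is stated in Section~\ref{sec:2} (``The materials'') as part of the background compendium, with the implicit understanding that the bounds are established in the cited references (notably \cite{BaHa:1986a,BaHa:1986b}, \cite{LaSa:2009a}, and \cite[Chapter~2]{Sayas:2014}). There is therefore nothing in the paper to compare your argument against.

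That said, your proposal is correct and is precisely the standard proof appearing in those references. The energy norm $N(u)$, the choice of test function $v=(\bar s/|s|)\bar w$ yielding the coercivity factor $\sigma/|s|$, the optimal $s$-dependent lifting with the half-power bound $N(u_\varphi)\lesssim(|s|/\underline\sigma)^{1/2}\|\varphi\|_{1/2}$, and the dual estimate of the normal trace via the same lifting are exactly the ingredients used in \cite{LaSa:2009a} and \cite{Sayas:2014}. Your bookkeeping of the exponents is accurate, including the observation that every bound has total degree three in $(|s|,\underline\sigma^{-1})$ and that the half-power trade is effected once by the Dirichlet lifting and once by the dual normal-trace bound. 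The only cosmetic point is your description of $\WW^{-1}(s)$: the sign in ``$\nd^\pm u=-\eta$'' is off (one has $\nd^\pm(-\DD(s)\varphi)=\WW(s)\varphi=\eta$), but this does not affect the estimate since the argument is the coercivity bound for the interior and exterior Neumann problems separately, followed by a trace of the resulting $H^1$ bound, exactly as you indicate.
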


\paragraph{Retarded potentials and operators.}
By Theorems \ref{th:FJS} and \ref{th:bounds} we can take the inverse Laplace transform of the operators and potentials defined above:
\begin{alignat*}{6}
\CS:=\mathcal L^{-1}\{\SS\} & \in \TD(\mathcal B(\Hmhalf,H^1_\Delta(\RdG))),\\
\CD:=\mathcal L^{-1}\{\DD\} & \in \TD(\mathcal B(\Hhalf,H^1_\Delta(\RdG))),\\
\CV:=\mathcal L^{-1}\{\VV\} &\in \TD(\mathcal B(\Hmhalf,\Hhalf)),\\
\CK:=\mathcal L^{-1}\{\KK\} &\in \TD(\mathcal B(\Hhalf,\Hhalf)),\\
\CK^t:=\mathcal L^{-1}\{\KK^t\} &\in \TD(\mathcal B(\Hmhalf,\Hmhalf)),\\
\CW:=\mathcal L^{-1}\{\WW\} &\in \TD(\mathcal B(\Hhalf,\Hmhalf)),\\
\CV^{-1}:=\mathcal L^{-1}\{\VV^{-1}\} &\in \TD(\mathcal B(\Hhalf,\Hmhalf)),\\
\CW^{-1}:=\mathcal L^{-1}\{\WW^{-1}\} &\in \TD(\mathcal B(\Hmhalf,\Hhalf)).
\end{alignat*}
The distributional version of Kirchhoff's formula can be stated by solving a transmission problem: given $\varphi\in \TD(\Hhalf)$ and $\lambda\in \TD(\Hmhalf)$ the unique solution to the problem
\begin{alignat*}{6}
u\in \TD(H^1_\Delta(\RdG)) \qquad
	& \ddot u=\Delta u,\\
	& \jump{\gamma u}=\varphi,\\
	& \jump{\nd u}=\lambda,
\end{alignat*}
is  $u=\CS*\lambda-\CD*\varphi.$ If we define $u=\CS*\lambda-\CD*\varphi,$ then
\[
\gamma^\pm u=\CV*\lambda-\CK*\varphi\mp\tfrac12\varphi,
	\qquad
\nd^\pm u=\mp\tfrac12\lambda+\CK^t*\lambda+\CW*\varphi.
\]

\section{The framework}\label{sec:3}

\paragraph{Function spaces and operators.} Let $\mathbb H$, $\mathbb V$,  $\mathbb M_1$ and $\mathbb M_2$ be Hilbert spaces. (They will correspond to the kinetic energy space, potential energy space, and two spaces of boundary conditions.) We assume that $\mathbb V$ is continuously embedded into $\mathbb H$. The abstract differential operator is a bounded linear operator
$\mathsf A_\star: \mathbb V \to \mathbb H.$ Some of the boundary conditions are encoded in a bounded linear and surjective operator $\mathsf B:\mathbb V\to\mathbb M_2.$
We assume the following property:
\begin{equation}\label{eq:0}
C_1^\star \| U\|_{\mathbb V}
	\le \| U\|_{\mathbb H}+ \| \mathsf A_\star U \|_{\mathbb H}
	\le C_2^\star \| U\|_{\mathbb V}
	\qquad \forall U\in \mathbb V.
\end{equation}
The rightmost inequality is a consequence of the boundedness of $\mathsf A_\star$ and of the injection of $\mathbb V$ into $\mathbb H$. 
We next define the operator
\[
\mathsf A:=\mathsf A_\star|_{D(\mathsf A)}:
	D(\mathsf A)\subset\mathbb H\to \mathbb H, \qquad D(\mathsf A):=\mathrm{Ker}\,\mathsf B.
\]
This operator will be treated as an unbounded operator. We assume that $\pm \mathsf A$ are maximal dissipative, i.e.,
\begin{equation}\label{eq:1}
(\mathsf A U,U)_{\mathbb H}=0 \qquad \forall U\in D(\mathsf A)
\end{equation}
and
\begin{equation}\label{eq:2}
\mathsf I \pm \mathsf A : D(\mathsf A)\to \mathbb H
	\quad\mbox{are surjective}.
\end{equation}
The maximal dissipativity of $-\mathsf A$ guarantees time-reversibility, but will not be used for the estimates.
Neither of $\pm\mathsf A_\star$ can be dissipative in their domain $\mathbb V$, since otherwise they would be dissipative extensions of a maximal dissipative operator. As a consequence of the above hypotheses $\mathsf A$ is the infinitesimal generator of a $C_0$-group of isometries in $\mathbb H$. 
(This is part of the Lumer-Philips Theorem cf. \cite[Theorem 4.5.1]{Kesavan:1989}.)
In particular $D(\mathsf A)$ is dense in $\mathbb H$ and, therefore, so is $\mathbb V$. Another bounded linear operator
$
\mathsf G : \mathbb M_1 \to \mathbb H
$
 deals with some `natural' boundary conditions that are added as source terms. A final hypothesis is the following: given arbritrary $\Xi:=(\xi,\chi)\in \mathbb M:=\mathbb M_1\times \mathbb M_2$, there exists a unique solution to
\begin{equation}\label{eq:3}
U\in \mathbb V, \qquad
U=\mathsf A_\star U+\mathsf G \xi, \qquad \mathsf B U=\chi,
\end{equation}
and
\begin{equation}\label{eq:4}
\| U\|_{\mathbb H}+\| U\|_{\mathbb V} \le C_{\mathrm{lift}} \|\Xi\|_{\mathbb M}.
\end{equation}
The operator $\mathsf L: \mathbb M\to \mathbb V$ given by the solution of \eqref{eq:3} will be referred to as a lifting operator.

\paragraph{The problem.} Given data functions
$F:[0,\infty)\to \mathbb H$ and $\Xi=(\xi,\chi):[0,\infty)\to \mathbb M$, 
we look for $U:[0,\infty) \to \mathbb V$ such that
\begin{subequations}\label{eq:5}
\begin{align}
\dot U(t) & = \mathsf A_\star U(t)+\mathsf G \xi(t)+F(t) & & t\ge 0,\\
\mathsf BU(t) &=\chi(t) & & t\ge 0,\\
U(0) & =0.
\end{align}
\end{subequations}
One might wonder why we keep the term $\mathsf G \xi$ separated from the `source terms' in $F$. The reason is that we expect $\| \mathsf G\|$ to be difficult to control and we will deal with this term through the lifting operator $\mathsf L$. In the end, the price to pay will be the need for higher regularity in time for $\xi$ than $F$, even if they apparently play similar roles in the equation. Note that if $U$ is continuous as a $\mathbb V$-valued function, then, necessarily $\chi(0)=0$. (The term related to $\mathsf G$ will not be used in this paper, but it is added here since this slightly more extended theory is used in other works \cite{HaSa:Sub}.)

\paragraph{The main results.} We will deal with the spaces
\[
W^k(X):=\{ f\in \mathcal C^{k-1}([0,\infty);X)\,:\, 
	f^{(k)}\in L^1((0,\infty);X),\,
	f^{(\ell)}(0)=0,\quad  \, 0\le \ell\le k-1\}.
\]
The space $W^k(X)$ can be characterized as the set of functions $f:[0,\infty)\to X$ such that $Ef\in W^k_+(\mathbb R;X).$

\begin{theorem}\label{the:3.1}
If $F_0\in W^1(\mathbb H)$ and $\Xi:=(\xi,\chi)\in W^2(\mathbb M)$, then equation \eqref{eq:5} has a unique solution $U\in \mathcal C^1([0,\infty);\mathbb H)\cap \mathcal C([0,\infty);\mathbb V)$ and for all $t\ge 0$:
\begin{subequations}\label{eq:8}
\begin{align}
		\label{eq:8a}
\| U(t)\|_{\mathbb H} \le
	 & C_{\mathrm{lift}} \left( \int_0^t \| \Xi(\tau)\|_{\mathbb M}\mathrm d\tau
							+ 2\int_0^t  \| \dot\Xi(\tau)\|_{\mathbb M} 	\mathrm d\tau\right)
		+ \int_0^t \| F(\tau)\|_{\mathbb H}\mathrm d\tau,\\	
		\label{eq:8b}
\| \dot U(t)\|_{\mathbb H} \le
	& C_{\mathrm{lift}} \left( \int_0^t \| \dot\Xi(\tau)\|_{\mathbb M}\mathrm d\tau
						+ 2\int_0^t  \| \ddot\Xi(\tau)\|_{\mathbb M}\mathrm d\tau\right)
		+ \int_0^t \| \dot F(\tau)\|_{\mathbb H}\mathrm d\tau.
\end{align}
\end{subequations}
\end{theorem}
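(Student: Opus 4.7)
My strategy is to eliminate the nonhomogeneous boundary condition $\mathsf B U=\chi$ by means of the lifting operator $\mathsf L$ described in \eqref{eq:3}--\eqref{eq:4}, and then appeal to the $C_0$-group of isometries generated by $\mathsf A$ to produce the solution by a Duhamel formula. Concretely, I introduce the change of unknown
\begin{equation*}
V(t):=U(t)-\mathsf L\Xi(t).
\end{equation*}
Because $\mathsf B\mathsf L=\mathrm{id}_{\mathbb M_2}$ (projected on the second component), $V(t)\in D(\mathsf A)=\mathrm{Ker}\,\mathsf B$. Using the identity $\mathsf A_\star\mathsf L\Xi+\mathsf G\xi=\mathsf L\Xi$ encoded in \eqref{eq:3}, a direct substitution into \eqref{eq:5} yields the reduced Cauchy problem
\begin{equation*}
\dot V(t)=\mathsf A V(t)+\underbrace{\mathsf L\Xi(t)-\mathsf L\dot\Xi(t)+F(t)}_{=:g(t)},\qquad V(0)=0,
\end{equation*}
which is now posed in $\mathbb H$ with the unbounded operator $\mathsf A$ and a genuine source term $g$.

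Next I verify the hypotheses of Pazy's theory for inhomogeneous evolution equations. The assumption $F\in W^1(\mathbb H)$ gives $F(0)=0$ and $\dot F\in L^1$; the assumption $\Xi\in W^2(\mathbb M)$ together with boundedness of $\mathsf L$ gives $\mathsf L\Xi,\mathsf L\dot\Xi\in\mathcal C([0,\infty);\mathbb V)$ with vanishing values at $0$, and $\mathsf L\ddot\Xi\in L^1((0,\infty);\mathbb V)$. Hence $g\in W^1(\mathbb H)$, and by the standard classical-solution result (see Pazy, \cite{Pazy:1983}), the mild solution
\begin{equation*}
V(t)=\int_0^t T(t-\tau)\,g(\tau)\,\mathrm d\tau
\end{equation*}
is in fact a classical solution in $\mathcal C^1([0,\infty);\mathbb H)\cap\mathcal C([0,\infty);D(\mathsf A))$, where $T(\cdot)$ is the group generated by $\mathsf A$. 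Undoing the lift, $U=V+\mathsf L\Xi$ belongs to $\mathcal C^1([0,\infty);\mathbb H)\cap\mathcal C([0,\infty);\mathbb V)$ and solves \eqref{eq:5}; uniqueness follows because the difference of two solutions satisfies the same reduced Cauchy problem with zero data and $\mathsf A$ is a group generator.

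The estimates \eqref{eq:8} are then obtained by exploiting that $T(t)$ is an isometry, so $\|V(t)\|_{\mathbb H}\le\int_0^t\|g(\tau)\|_{\mathbb H}\mathrm d\tau$. Using $\|\mathsf L\Xi(\tau)\|_{\mathbb H}\le C_{\mathrm{lift}}\|\Xi(\tau)\|_{\mathbb M}$ (and similarly for $\dot\Xi$), applying the triangle inequality to $U=V+\mathsf L\Xi$, and controlling $\|\Xi(t)\|_{\mathbb M}\le\int_0^t\|\dot\Xi(\tau)\|_{\mathbb M}\mathrm d\tau$ through the fundamental theorem of calculus (valid since $\Xi(0)=0$), produces \eqref{eq:8a} with exactly the stated constants. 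For \eqref{eq:8b}, I differentiate the reduced equation, noting that $g\in W^1(\mathbb H)$ allows $W(t):=\dot V(t)$ to be identified as the classical solution of $\dot W=\mathsf A W+\dot g$ with $W(0)=0$ (the initial condition vanishes because $V(0)=0$, $\Xi(0)=0$, and $F(0)=0$). Repeating the isometry argument for $W$ and then using $\dot U=W+\mathsf L\dot\Xi$ together with $\|\dot\Xi(t)\|_{\mathbb M}\le\int_0^t\|\ddot\Xi(\tau)\|_{\mathbb M}\mathrm d\tau$ yields \eqref{eq:8b}.

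The main conceptual obstacle is justifying that the mild solution produced by Pazy's formula really lies in $\mathbb V$ pointwise in $t$ (rather than merely in $\mathbb H$) and that it is differentiable in $\mathbb H$; this is where the regularity assumptions $W^1(\mathbb H)$ and $W^2(\mathbb M)$ are genuinely used and cannot be relaxed. Everything else is essentially an accounting of constants arising from the isometry of the group and the boundedness of $\mathsf L$.
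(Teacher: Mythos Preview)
Your proof is correct and follows essentially the same route as the paper: your $V$ is the paper's $U_0$, your source $g$ is the paper's $F_0=F+\mathsf L(\Xi-\dot\Xi)$, and both arguments invoke Pazy's classical-solution result together with the isometry of the group to obtain the Duhamel bounds, then undo the lift and absorb $\|\mathsf L\Xi(t)\|_{\mathbb H}$ via $\|\Xi(t)\|_{\mathbb M}\le\int_0^t\|\dot\Xi\|$. One small technical remark: with only $g\in W^1(\mathbb H)$ you cannot claim that $W=\dot V$ is a \emph{classical} solution of $\dot W=\mathsf AW+\dot g$ (that would require more regularity of $\dot g$); however, $W$ is the \emph{mild} solution $W(t)=\int_0^t T(t-\tau)\dot g(\tau)\,\mathrm d\tau$ (using $g(0)=0$), and the isometry bound you use holds for mild solutions, so the estimate \eqref{eq:8b} is unaffected.
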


\begin{proof}
Let $U_{\mathrm{NH}}:=\mathsf L\Xi\in W^2(\mathbb V)$. and let $U_0:[0,\infty)\to D(\mathsf A)$ be the unique solution of
\begin{equation}\label{eq:7}
\dot U_0(t)=\mathsf A U_0(t)+ F_0(t) \quad t\ge 0, \qquad U_0(0)=0,
\end{equation}
where $F_0:=F+U_{\mathrm{NH}}-\dot U_{\mathrm{NH}}=F+\mathsf L(\Xi-\dot\Xi)\in W^1(\mathbb H)$. By 
\cite[Corollary 2.5]{Pazy:1983}, there exists a unique solution of \eqref{eq:7} $U_0\in \mathcal C^1([0,\infty);\mathbb H)\cap \mathcal C([0,\infty);D(\mathsf A))$. Moreover
\[
\| U_0(t)\|_{\mathbb H}\le \int_0^t \| F_0(\tau)\|_{\mathbb H}\mathrm d\tau,
	\qquad
\| \dot U_0(t)\|_{\mathbb H}\le \int_0^t \| \dot F_0(\tau)\|_{\mathbb H}\mathrm d\tau
	\qquad \forall t\ge 0.
\]
Adding \eqref{eq:3} and \eqref{eq:7}, it is clear that $U:=U_{\mathrm{NH}}+U_0$ is a solution of \eqref{eq:5} and $U\in \mathcal C^1([0,\infty);\mathbb H)\cap \mathcal C([0,\infty);\mathbb V))$. Using \eqref{eq:4}, we can bound
\begin{align*}
\| U(t)\|_{\mathbb H} \le
	& C_{\mathrm{lift}} \left( \| \Xi(t)\|_{\mathbb M} 
		+ \int_0^t \| \Xi(\tau)-\dot\Xi(\tau)\|_{\mathbb M}\mathrm d\tau\right)
		+ \int_0^t \| F(\tau)\|_{\mathbb H}\mathrm d\tau \\
	\le & C_{\mathrm{lift}} \left( \int_0^t \| \Xi(\tau)\|_{\mathbb M}\mathrm d\tau
							+ 2\int_0^t  \| \dot\Xi(\tau)\|_{\mathbb M}\mathrm d\tau\right)
		+ \int_0^t \| F(\tau)\|_{\mathbb H}\mathrm d\tau.			
\end{align*}
We can prove \eqref{eq:8b} similarly.  Uniqueness of the solution to \eqref{eq:5} follows from uniqueness of the solution of
\[
\dot V(t)=\mathsf AV(t) \quad t\ge 0, \qquad V(0)=0
\]
and the fact that $D(\mathsf A)=\mathrm{Ker}\,\mathsf B$. 
\end{proof}

Note that by \eqref{eq:0} and \eqref{eq:8},
\begin{align}\label{boundV}
\nonumber
C_1^\star \| U(t)\|_{\mathbb V}
\le & \| U(t)\|_{\mathbb H}+\| \mathsf A_\star U(t)\|_{\mathbb H} \\
\nonumber
\le & \| U(t)\|_{\mathbb H}+\|\dot U(t)\|_{\mathbb H}+\| F(t)\|_{\mathbb H}+\|\mathsf G\xi(t)\|_{\mathbb H}\\
\le & C_{\mathrm{lift}} \left( \int_0^t \| \Xi(\tau)\|_{\mathbb M}\mathrm d\tau
		+ 3 \int_0^t \| \dot\Xi(\tau)\|_{\mathbb M}\mathrm d\tau
		+ 2 \int_0^t \| \ddot\Xi(\tau)\|_{\mathbb M}\mathrm d\tau\right) \\
\nonumber
	& + \int_0^t\| F(\tau)\|_{\mathbb H}\mathrm d\tau + 2\int_0^t\| \dot F(\tau)\|_{\mathbb H}\mathrm d\tau
	+ \| \mathsf G\|\,\| \Xi(t)\|_{\mathbb M}.
\end{align}

\begin{theorem}[Distributional extension]\label{the:3.2}
Let $U$ be the solution of \eqref{eq:5} for data in the hypotheses of Theorem \ref{the:3.1}, and let $\underline U:=EU$, $\underline \xi:=E\xi$, $\underline\chi:=E\chi$, $\underline F=E F$. Then $\underline U$ is the unique solution of 
\begin{equation}\label{eq:3.9}
\underline U\in \TD(\mathbb V), \qquad
\dot{\underline U}=\mathsf A_\star\underline U+ \mathsf G\underline\xi+\underline F,
\qquad
\mathsf B\underline U=\underline\chi.
\end{equation}
\end{theorem}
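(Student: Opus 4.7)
The conclusion has four ingredients: $\underline U$ belongs to $\TD(\mathbb V)$, it satisfies the distributional evolution equation, it satisfies the boundary condition, and the problem \eqref{eq:3.9} has at most one solution in $\TD(\mathbb V)$. My plan is to promote the pointwise conclusions of Theorem \ref{the:3.1} to their distributional counterparts by inspecting the zero-extension $E$ described in Section \ref{sec:2}, and then to settle uniqueness by a Laplace-transform argument that exploits the isometric $C_0$-group structure generated by $\mathsf A$.

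\textbf{From pointwise to distributional.} Theorem \ref{the:3.1} gives $U\in \mathcal C([0,\infty);\mathbb V)$ with $U(0)=0$, so $\underline U=EU$ is continuous on $\mathbb R$ and supported in $[0,\infty)$. The hypothesis $\Xi\in W^2(\mathbb M)$ forces $\dot\Xi\in L^\infty$ (bounded by $\|\ddot\Xi\|_{L^1}$) and hence $\|\Xi(t)\|_{\mathbb M}\le t\|\ddot\Xi\|_{L^1}$, while $F\in W^1(\mathbb H)$ gives $F\in L^\infty$; substituting these into \eqref{boundV} shows $\|U(t)\|_{\mathbb V}$ is polynomially bounded. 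Theorem \ref{th:FJS} applied with $k=0$ then places $\underline U\in \TD(\mathbb V)$. Because $U\in\mathcal C^1([0,\infty);\mathbb H)$ with $U(0)=0$, the identity $\tfrac{d}{dt}(Ef)=E\dot f$ recorded in Section \ref{sec:2} yields $\dot{\underline U}=E\dot U$ in the sense of $\mathbb H$-valued distributions; substituting \eqref{eq:5} and commuting the bounded operators $\mathsf A_\star$ and $\mathsf G$ through $E$ (which is permitted because bounded operators act pointwise on $\TD$-distributions, as noted after Theorem \ref{th:FJS}) produces $\dot{\underline U}=\mathsf A_\star \underline U+\mathsf G\underline\xi+\underline F$ as an equality in $\TD(\mathbb H)$. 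Applying the bounded operator $\mathsf B$ to $\underline U=EU$ and using $\mathsf B U=\chi$ pointwise gives $\mathsf B\underline U=\underline\chi$.

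\textbf{Uniqueness.} The main obstacle is uniqueness at the distributional level, since the strong-solution argument used in Theorem \ref{the:3.1} is unavailable. Let $W\in\TD(\mathbb V)$ satisfy $\dot W=\mathsf A_\star W$ in $\TD(\mathbb H)$ and $\mathsf B W=0$ in $\TD(\mathbb M_2)$. Laplace-transforming both identities and using $\mathcal L\{\dot W\}(s)=s\widehat W(s)$, the transform $\widehat W(s)\in\mathbb V$ satisfies $\mathsf B\widehat W(s)=0$, so $\widehat W(s)\in D(\mathsf A)=\ker\mathsf B$, and $(s\mathsf I-\mathsf A)\widehat W(s)=0$ for every $s\in\mathbb C_+$. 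Maximal dissipativity of $\pm\mathsf A$ makes $\mathsf A$ the generator of a $C_0$-group of isometries, so $\mathbb C_+\subset\rho(\mathsf A)$ and $s\mathsf I-\mathsf A$ is injective on $D(\mathsf A)$; hence $\widehat W(s)=0$ on $\mathbb C_+$, and injectivity of the Laplace transform on $\TD$ forces $W=0$. The only subtle piece of bookkeeping is the polynomial-growth verification that lands $\underline U$ in $\TD(\mathbb V)$; everything else is a routine use of $E$ and $\mathcal L$.
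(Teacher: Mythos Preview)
Your argument is correct and, for the existence part (membership in $\TD(\mathbb V)$, the distributional evolution equation, and the boundary condition), follows exactly the paper's route: polynomial growth from \eqref{eq:8a}--\eqref{boundV}, then the identity $\tfrac{d}{dt}(EU)=E\dot U$ together with the fact that $E$ commutes with time-independent bounded operators.

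The one substantive difference is that you supply an explicit uniqueness proof in $\TD(\mathbb V)$ via the Laplace transform and the resolvent inclusion $\mathbb C_+\subset\rho(\mathsf A)$, whereas the paper's proof simply stops after verifying that $\underline U$ satisfies \eqref{eq:3.9} and does not address uniqueness at the distributional level at all. Your Laplace-domain argument is the natural way to close this gap and is fully justified by the $\TD$ framework of Section~\ref{sec:2}; in that sense your proof is more complete than the paper's.
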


\begin{proof}
Let
\[
C_\Xi:=\int_0^\infty \| \ddot \Xi(\tau)\|_{\mathbb M}\mathrm d\tau,
\qquad
C_F:=\int_0^\infty \| \dot F(\tau)\|_{\mathbb H}\mathrm d\tau.
\]
The bound \eqref{eq:8a} implies that
\[
\| U(t)\|_{\mathbb H} \le C_{\mathrm{lift}}C_\Xi (1+2t) + C_F(1+t),
\]
and by \eqref{boundV}
\[
\| U(t)\|_{\mathbb V} \le C_{\mathrm{lift}}C_\Xi (1+3t+t^2) + C_F(1+t)+\| \mathsf G\| C_\Xi t.
\]
This implies that $U$ is polynomially bounded for large $t$ as an $\mathbb H$- and $\mathbb V$-valued function. Therefore $\underline U:=E U\in \TD(\mathbb V)$ and $\underline U\in \TD(\mathbb H)$. As seen in Section \ref{sec:2}, since $U\in \mathcal C^1([0,\infty);\mathbb H)$ and $U(0)=0$, then
\[
\tfrac{\mathrm d}{\mathrm dt} \underline U= E\dot U
\]
as $\mathbb H$-valued distributions. Since $E$ is a linear operator that commutes with any operator that is independent of the time variable, then \eqref{eq:3.9} is satisfied.
\end{proof}

\section{The general result}\label{sec:4}

We are next going to define a particular (while quite general in purpose) example of dynamical system as those studied in Section \ref{sec:3}. We take $\mathbb H:=L^2(\RdG)\times \mathbf L^2(\RdG)$, $\mathbb V:=H^1(\RdG)\times \Hdiv{\RdG}$ and $\mathsf A_\star U=\mathsf A_\star(u,\mathbf v):=(\nabla\cdot\mathbf v,\nabla u)$. We now consider two closed spaces
\[
X_h\subset \Hmhalf, \qquad Y_h\subset\Hhalf,
\]
and their polar sets
\begin{align*}
X_h^\circ & :=\{ \varphi\in \Hhalf\,:\, \langle \mu^h,\varphi\rangle_\Gamma=0 \quad\forall \mu^h\in X_h\},\\
Y_h^\circ &:=\{ \eta\in \Hmhalf\,:\,\langle\eta,\varphi^h\rangle_\Gamma=0 \quad\forall \varphi^h\in Y_h\}.
\end{align*}
We next consider the spaces with homogeneous abstract transmission conditions
\begin{align*}
U_h &:=\{ u\in H^1(\RdG)\,:\, \gamma^+ u\in X_h^\circ, \quad\jump{\gamma u}\in Y_h\},\\
\Vh &:=\{\bff v \in \Hdiv{\RdG}\,:\, \jump{\ntr\bff v}\in X_h, \quad\ntr^-\bff v\in Y_h^\circ\},
\end{align*}
as well as the operator $\mathsf A:D(\mathsf A)\subset\mathbb H\to\mathbb H$, where $D(\mathsf A):=U_h\times \Vh$. 

\paragraph{One remark.} We can fit this example in the framework of Section \ref{sec:3} by using the operator $\mathsf B(u,\bff v):=(\gamma^+u|_{X_h}, \ntr^-\bff v|_{Y_h}, \jump{\gamma u}|_{Y_h^\circ}, \jump{\ntr\bff v}|_{Y_h^\circ})$ taking values in $\mathbb M_2:=X_h^*\times (X_h^\circ)^*\times Y_h^*\times (Y_h^\circ)^*$. Let us clarify this point. The trace $\gamma^+ u$ is in $\Hhalf=\Hmhalf^*$ and we can therefore understand $\gamma^+ u|_{X_h}:X_h \to \R$ as an element of the dual space of $X_h$, which we denote $X_h^*$, defined by $X_h \ni \mu^h \mapsto \langle \mu^h,\gamma^+ u\rangle_\Gamma$.  The same explanation works for the three remaining components of $\mathsf B$. Note that $D(\mathsf A)=\mathrm{Ker}\,\mathsf B$: for instance, $\gamma^+u|_{X_h}=0$ is the same as $\gamma^+ u\in X_h^\circ$ and $\jump{\gamma u}|_{Y_h^\circ}=0$ is equivalent to $\jump{\gamma u}\in (Y_h^\circ)^\circ=Y_h$, because $Y_h$ is closed.

\paragraph{A second remark.} If we choose the conditions based on $\gamma^- u$ and $\ntr^+ \bff v$ we obtain a very similar problem for which everything we will prove still holds. This second particular problem contains some additional examples as concrete instances, but all the results that this new problem would provide can be proved by adequately choosing the right-hand sides in the problem we will study. 

\begin{proposition}[Infinitesimal generator]\label{prop:4.1}
 The operators $\pm \mathsf A:D(\mathsf A)\subset \mathbb H\to \mathbb H$ are maximal dissipative.
\end{proposition}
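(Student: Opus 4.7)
The plan is to verify both defining properties of maximal dissipativity for $\pm\mathsf A$: the skew-symmetry identity \eqref{eq:1}, which is the same statement for $\mathsf A$ and $-\mathsf A$, together with the surjectivity \eqref{eq:2} of $\mathsf I\pm\mathsf A$ onto $\mathbb H$. For the skew-symmetry I would expand $(\mathsf A U, U)_{\mathbb H} = (\nabla\cdot\mathbf v, u)_{\RdG} + (\nabla u, \mathbf v)_{\RdG}$ for $U=(u,\mathbf v)\in D(\mathsf A) = U_h\times\Vh$ and apply Green's formula separately on $\Omega_{\pm}$ through the defining identities for $\ntr^\pm$. All volume contributions cancel and what remains is the skew boundary expression $\langle\ntr^-\mathbf v,\gamma^- u\rangle_\Gamma - \langle\ntr^+\mathbf v,\gamma^+ u\rangle_\Gamma$. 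Substituting $\gamma^- u = \gamma^+ u + \jump{\gamma u}$ and $\ntr^+\mathbf v = \ntr^-\mathbf v - \jump{\ntr\mathbf v}$ regroups this as $\langle\jump{\ntr\mathbf v},\gamma^+ u\rangle_\Gamma + \langle\ntr^-\mathbf v,\jump{\gamma u}\rangle_\Gamma$, and both terms vanish by the orthogonality relations built into $D(\mathsf A)$: the first because $\jump{\ntr\mathbf v}\in X_h$ pairs with $\gamma^+ u\in X_h^\circ$, the second because $\ntr^-\mathbf v\in Y_h^\circ$ pairs with $\jump{\gamma u}\in Y_h$.

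For the surjectivity of $\mathsf I+\mathsf A$, given $(f,\mathbf g)\in\mathbb H$, I would seek $u\in U_h$ solving the elliptic variational problem
\[
(u,w)_{\RdG} + (\nabla u,\nabla w)_{\RdG} = (f,w)_{\RdG} + (\mathbf g,\nabla w)_{\RdG} \qquad \forall\, w\in U_h,
\]
and then set $\mathbf v := \mathbf g - \nabla u$. Since $U_h$ is closed in $H^1(\RdG)$ and the bilinear form is simply the $H^1(\RdG)$ inner product, the Riesz representation theorem produces a unique $u$. Testing against $w\in C_c^\infty(\Omega_\pm)\subset U_h$ identifies $\nabla\cdot\mathbf v = f - u\in L^2(\RdG)$ in the distributional sense, which simultaneously places $\mathbf v$ in $\Hdiv{\RdG}$ and delivers the first component $u+\nabla\cdot\mathbf v = f$ of the target equation.

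The main obstacle is verifying the abstract boundary conditions $\jump{\ntr\mathbf v}\in X_h$ and $\ntr^-\mathbf v\in Y_h^\circ$ that would put $\mathbf v$ in $\Vh$. Once $\nabla\cdot\mathbf v\in L^2(\RdG)$ is known, Green's formula converts the variational identity into $\langle\ntr^-\mathbf v,\gamma^- w\rangle_\Gamma - \langle\ntr^+\mathbf v,\gamma^+ w\rangle_\Gamma = 0$ for every $w\in U_h$. The combined trace $(\gamma^-,\gamma^+):H^1(\RdG)\to\Hhalf\times\Hhalf$ is surjective, so any pair $(\alpha,\psi)\in X_h^\circ\times Y_h$ is realized as $\gamma^+ w = \alpha$ and $\jump{\gamma w}=\psi$ for some $w\in U_h$. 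Regrouping, the boundary identity becomes $\langle\jump{\ntr\mathbf v},\alpha\rangle_\Gamma + \langle\ntr^-\mathbf v,\psi\rangle_\Gamma = 0$; varying $\alpha\in X_h^\circ$ with $\psi=0$ gives $\jump{\ntr\mathbf v}\in(X_h^\circ)^\circ = X_h$ by the bipolar theorem (invoking closedness of $X_h$), and varying $\psi\in Y_h$ with $\alpha=0$ gives $\ntr^-\mathbf v\in Y_h^\circ$. The surjectivity of $\mathsf I-\mathsf A$ follows from the same strategy applied to the variational problem with the sign of the $(\mathbf g,\nabla w)_{\RdG}$ term flipped.
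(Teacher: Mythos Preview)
Your proposal is correct and follows essentially the same approach as the paper's proof: the skew-symmetry computation for dissipativity is identical, and for surjectivity you solve the same coercive variational problem in $U_h$, define $\mathbf v=\mathbf g\mp\nabla u$, test with compactly supported functions to recover $\nabla\cdot\mathbf v\in L^2(\RdG)$, and then read off the transmission conditions for $\mathbf v$ from the resulting boundary identity via the surjectivity of $w\mapsto(\jump{\gamma w},\gamma^+ w)$ onto $Y_h\times X_h^\circ$. The only cosmetic differences are that you invoke Riesz and the bipolar theorem by name where the paper speaks of a coercive problem and closedness of $X_h$.
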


\begin{proof}
Note first that for all $(u,\bff v)\in U_h\times \Vh$,
\begin{align*}
(\mathsf A(u,\bff v),(u,\bff v))_{\mathbb H}
	& = (\nabla\cdot\bff v,u)_\RdG+(\bff v,\nabla u)_\RdG \\
	& =\langle\ntr^- \bff v,\gamma^- u\rangle_\Gamma-\langle \ntr^+\bff v,\gamma^+ u\rangle_\Gamma \\
	& = \langle \ntr^-\bff v,\jump{\gamma u}\rangle_\Gamma
		+\langle \jump{\ntr \bff v},\gamma^+ u\rangle_\Gamma =0,
\end{align*}
which proves that $\pm \mathsf A$ are dissipative. Let now $(f,\bff g)\in \mathbb H$. We look for
$(u,\bff v)\in U_h\times \Vh$ satisfying
\[
u\pm \nabla\cdot\bff v= f, \qquad \bff v\pm \nabla u=\bff g,
\]
with both equations taking place in $\RdG$. To do that we first solve the coercive variational problem
\begin{equation}\label{eq:4.1}
u\in U_h \qquad
	(u,w)_\RdG+(\nabla u,\nabla w)_\RdG =(f,w)_\RdG\pm (\bff g,\nabla w)_\RdG \quad \forall w\in U_h,
\end{equation}
and then define $\bff v:=\mp \nabla u+\bff g \in L^2(\RdG)$. We then substitute $\nabla u=\mp (\bff v-\bff g)$ in \eqref{eq:4.1}  and simplify to obtain
\begin{equation}\label{eq:4.2}
(u,w)_\RdG\mp (\bff v,\nabla w)_\RdG=(f,w)_\RdG \qquad \forall w\in U_h.
\end{equation}
Testing \eqref{eq:4.2} with a general $\mathcal C^\infty$ function with compact support in $\RdG$, it follows that $u\pm \nabla\cdot\bff v=f$ and therefore $\bff v\in \Hdiv{\RdG}$. Note that we only need to prove the transmission conditions related to $\bff v$ to finish the proof (of surjectivity of $\mathsf I\pm \mathsf A$). We now substitute $f=u\pm \nabla\cdot\bff v$ in \eqref{eq:4.1} to prove that
\[
(\bff v,\nabla w)_\RdG+(\nabla\cdot\bff v,w)_\RdG=0 \qquad \forall w\in U_h,
\]
or equivalently, 
\begin{equation}\label{eq:4.3}
\langle \ntr^-\bff v,\jump{\gamma w}\rangle_\Gamma
	+\langle\jump{\ntr\bff v},\gamma^+ w\rangle_\Gamma = 0 \qquad \forall w\in U_h.
\end{equation}
Since the operator $H^1(\RdG)\ni w\longmapsto (\jump{\gamma w},\gamma^+ w)\in \Hhalf\times \Hhalf$ is surjective, it is easy to see that so is $U_h \ni w \longmapsto (\jump{\gamma w},\gamma^+ w)\in Y_h\times X_h^\circ$. Therefore \eqref{eq:4.3} implies that $\ntr^-\bff v\in Y_h^\circ$ and $\jump{\ntr \bff v}\in X_h$, which finishes the proof.
\end{proof}

For convenience, we introduce the space $\mathbb M(\Gamma):=\Hhalf\times \Hhalf\times \Hmhalf\times\Hmhalf$, endowed with the product norm, denoted $\|\cdot\|_{\pm1/2,\Gamma}$.

\begin{proposition}[Lifting operator]\label{prop:4.2}
For all $(\rho_1,\rho_2,\psi_1,\psi_2)\in \mathbb M(\Gamma)$ there exists a unique $(u,\bff v)\in \mathbb V$ satisfying
\begin{subequations}\label{eq:4.4}
\begin{align}
& u =\nabla\cdot\bff v, & & \bff v=\nabla u, \\
& \gamma^+ u-\rho_1 \in X_h^\circ & & \jump{\gamma u}-\rho_2\in Y_h, \\
& \ntr^-\bff v-\psi_1\in Y_h^\circ & & \jump{\ntr\bff v}-\psi_2\in X_h.
\end{align}
\end{subequations}
The solution of \eqref{eq:4.4} can be bounded as follows
\begin{equation}
\| u\|_{1,\RdG}=\|\bff v\|_{\divv,\RdG}\le C_\Gamma \| (\rho_1,\rho_2,\psi_1,\psi_2)\|_{\pm 1/2,\Gamma},
\end{equation}
where $C_\Gamma$ only depends on the geometry of the problem through constants related to the trace operator
and its optimal right-inverse.
\end{proposition}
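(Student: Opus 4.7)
Combining the two identities in the first pair of equations of \eqref{eq:4.4} yields $u - \Delta u = 0$ in $\RdG$. Once $u \in H^1(\RdG)$ is found, setting $\bff v := \nabla u$ automatically gives $\nabla \cdot \bff v = u \in L^2(\RdG)$, so $\bff v \in \Hdiv{\RdG}$ and the differential identities of \eqref{eq:4.4} are satisfied. The plan is therefore to recast the problem as a coercive variational problem for $u$ in which the Dirichlet-type conditions on $\gamma^+ u$ and $\jump{\gamma u}$ are imposed essentially (as an affine constraint on the trial space), while the Neumann-type conditions on $\ntr^- \bff v$ and $\jump{\ntr \bff v}$ emerge naturally from the variational identity.

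\textbf{Construction.} By surjectivity of the combined trace $H^1(\RdG) \ni w \mapsto (\gamma^+ w, \jump{\gamma w}) \in \Hhalf \times \Hhalf$, fix a lift $u_\star \in H^1(\RdG)$ with $\gamma^+ u_\star = \rho_1$, $\jump{\gamma u_\star} = \rho_2$, and $\|u_\star\|_{1,\RdG} \le C(\|\rho_1\|_{1/2,\Gamma} + \|\rho_2\|_{1/2,\Gamma})$. Since $U_h$ is closed in $H^1(\RdG)$ (the traces are continuous and the targets $X_h^\circ,\,Y_h$ are closed), Lax-Milgram provides a unique $u_0 \in U_h$ satisfying
\[
(\nabla u_0, \nabla w)_\RdG + (u_0, w)_\RdG = \langle \psi_1, \jump{\gamma w}\rangle_\Gamma + \langle \psi_2, \gamma^+ w\rangle_\Gamma - (\nabla u_\star, \nabla w)_\RdG - (u_\star, w)_\RdG
\]
for every $w \in U_h$: the bilinear form coincides with the $H^1(\RdG)$ inner product (hence is coercive with constant $1$), and the right-hand side is controlled by $\|(\rho_1,\rho_2,\psi_1,\psi_2)\|_{\pm 1/2,\Gamma}$ via standard trace bounds.

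\textbf{Verification and main obstacle.} Set $u := u_0 + u_\star$ (which satisfies the Dirichlet-type constraints by construction) and $\bff v := \nabla u$. Testing the variational identity against arbitrary $\mathcal C^\infty$ functions compactly supported in $\RdG$ forces $u = \Delta u$ distributionally, whence $\bff v \in \Hdiv{\RdG}$ with $\nabla \cdot \bff v = u$. Plugging this information back into the variational equation and applying the Green formulas that define $\ntr^\pm$ on each of $\Omega_\pm$ transforms it into
\[
\langle \ntr^- \bff v - \psi_1, \jump{\gamma w}\rangle_\Gamma + \langle \jump{\ntr \bff v} - \psi_2, \gamma^+ w\rangle_\Gamma = 0 \qquad \forall w \in U_h.
\]
The main technical step is to extract the Neumann-type conditions from this identity: the surjectivity of $U_h \ni w \mapsto (\jump{\gamma w}, \gamma^+ w) \in Y_h \times X_h^\circ$ (already exploited in the proof of Proposition \ref{prop:4.1}), combined with the closedness of $X_h$ (so that $(X_h^\circ)^\circ = X_h$), delivers $\ntr^- \bff v - \psi_1 \in Y_h^\circ$ and $\jump{\ntr \bff v} - \psi_2 \in X_h$. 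Uniqueness follows by testing the associated homogeneous problem with $w = u \in U_h$ and invoking coercivity; the announced stability bound combines the Lax-Milgram estimate with the lifting bound for $u_\star$ and the elementary identity $\|\bff v\|_{\divv,\RdG} = \|u\|_{1,\RdG}$, immediate from $\bff v = \nabla u$ and $\nabla \cdot \bff v = u$.
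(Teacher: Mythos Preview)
Your proof is correct and follows essentially the same route as the paper: reduce \eqref{eq:4.4} to the scalar equation $-\Delta u+u=0$ with the four transmission conditions, lift the two Dirichlet-type constraints, solve the resulting coercive variational problem in $U_h$, and set $\bff v=\nabla u$. Your verification of the natural (Neumann-type) conditions via the surjectivity of $U_h\ni w\mapsto (\jump{\gamma w},\gamma^+w)\in Y_h\times X_h^\circ$ is exactly the argument the paper alludes to when asserting the equivalence of \eqref{eq:4.5} and \eqref{eq:4.6}; you simply spell it out in more detail.
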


\begin{proof}
Solving problem \eqref{eq:4.4} is equivalent to solving
\begin{subequations}\label{eq:4.5}
\begin{align}
& -\Delta u+u=0, \\
& \gamma^+ u-\rho_1 \in X_h^\circ & & \jump{\gamma u}-\rho_2\in Y_h, \\
& \nd^-u-\psi_1\in Y_h^\circ & & \jump{\nd u}-\psi_2\in X_h.
\end{align}
\end{subequations}
and then defining $\bff v=\nabla u$. However, \eqref{eq:4.5} is equivalent to
\begin{subequations}\label{eq:4.6}
\begin{align}
& u\in H^1(\RdG), \\
& \gamma^+u-\rho_1\in X_h^\circ, \qquad  \jump{\gamma u}-\rho_2\in Y_h,\\
& (u,w)_\RdG+(\nabla u,\nabla w)_\RdG
=\langle \psi_1,\jump{\gamma w}\rangle_\Gamma
	+\langle\psi_2,\gamma^+ w\rangle_\Gamma \qquad \forall w\in U_h.
\end{align}
\end{subequations}
Problem \eqref{eq:4.6} is a coercive variational problem in $U_h$ after decomposing the solution in the form $u=u_d+u_0$, where $\gamma^+u_d=\rho_1$, $\jump{\gamma u_d}=\rho_2$, and $u_0\in U_h$. Note that to build $u_d$ we just need to invert the trace conditions $\gamma^+ u_d=\rho_1$ and $\gamma^-u_d=\rho_1+\rho_2$, which can be done independently of the spaces $X_h$ and $Y_h$. Note also that the coercivity and boundedness constant of the bilinear and linear forms in \eqref{eq:4.6} are independent of these spaces as well.
\end{proof}

Propositions \ref{prop:4.1} and \ref{prop:4.2} have verified the conditions on the operator and boundary conditions given in Section \ref{sec:3}. We are now ready to use Theorems \ref{the:3.1} and \ref{the:3.2} to derive results on a wave equation associated to the operators $(\mathsf A,\mathsf B)$. Since we work with the second order wave equations (given in Section \ref{sec:2}), the problem will be translated to a first order (in space and time) system in the proof of the next result.

\begin{theorem}\label{the:4.3}
Let $(\alpha_1,\alpha_2)\in W^2_+(\R;\Hhalf^2)$ and $(\beta_1,\beta_2)\in W^1_+(\R;\Hmhalf^2)$. The unique solution of
\begin{subequations}\label{eq:4.8}
\begin{align}
u\in \TD(H^1_\Delta(\RdG)) \quad &  \ddot u=\Delta u,\\
&\gamma^+u-\alpha_1\in X_h^\circ,  && \jump{\gamma u}-\alpha_2 \in Y_h,\\
& \nd^-u-\beta_1 \in Y_h^\circ,  && \jump{\nd u}-\beta_2\in X_h,
\end{align}
\end{subequations}
satisfies
\begin{equation}
u\in \mathcal C^1_+(\R;L^2(\RdG))\cap \mathcal C^0_+(\R;H^1(\RdG))
\end{equation}
and
\begin{equation}\label{eq:4.10}
\| u(t)\|_{1,\RdG}\le 3C_\Gamma 
\sum_{\ell=0}^2 \int_0^t \| (\alpha_1^{(\ell)},\alpha_2^{(\ell)},	
							\beta_1^{(\ell-1)},\beta_2^{(\ell-1)})(\tau)\|_{\pm1/2,\Gamma}
							\mathrm d\tau	\qquad\forall t\ge 0,
\end{equation}
where $\beta^{(-1)}:=\partial^{-1}\beta$. If $(\alpha_1,\alpha_2)\in W^3_+(\R;\Hhalf^2)$ and $(\beta_1,\beta_2)\in W^2_+(\R;\Hmhalf^2)$, then
\begin{equation}\label{eq:4.11}
\| \nabla u(t)\|_{\divv,\RdG}\le 3C_\Gamma 
\sum_{\ell=1}^3 \int_0^t \| (\alpha_1^{(\ell)},\alpha_2^{(\ell)},	
							\beta_1^{(\ell-1)},\beta_2^{(\ell-1)})(\tau)\|_{\pm1/2,\Gamma}
							\mathrm d\tau	\qquad\forall t\ge 0.
\end{equation}
The constant $C_\Gamma$ in \eqref{eq:4.10} and \eqref{eq:4.11} is the one of Proposition \ref{prop:4.2} and is, therefore, independent of the choice of $X_h$ and $Y_h$.
\end{theorem}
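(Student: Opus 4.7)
The plan is to reduce the second-order-in-time transmission problem \eqref{eq:4.8} to a first-order system fitting the concrete dynamical system set up at the start of Section \ref{sec:4}, and then invoke Theorems \ref{the:3.1} and \ref{the:3.2}.

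Introduce the auxiliary unknown $\bff v := \partial^{-1}\nabla u$, so that $\dot{\bff v} = \nabla u$; using $\ddot u = \Delta u$ and causality, $\nabla\cdot\bff v = \partial^{-1}\Delta u = \partial^{-1}\ddot u = \dot u$. Thus $U:=(u,\bff v)$ obeys the first-order system $\dot U = \mathsf A_\star U$ with $F \equiv 0$ and $\xi \equiv 0$. Since $\ntr^-\bff v = \partial^{-1}\nd^- u$ and $\jump{\ntr\bff v} = \partial^{-1}\jump{\nd u}$, the Neumann-type conditions in \eqref{eq:4.8} become, after one antidifferentiation in time, $\ntr^-\bff v - \partial^{-1}\beta_1 \in Y_h^\circ$ and $\jump{\ntr\bff v} - \partial^{-1}\beta_2 \in X_h$. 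The abstract boundary datum is therefore $\chi := (\alpha_1, \alpha_2, \partial^{-1}\beta_1, \partial^{-1}\beta_2)$, and the hypothesis $(\alpha_i)\in W^2_+(\R;\Hhalf)$, $(\beta_i)\in W^1_+(\R;\Hmhalf)$ is exactly $\chi \in W^2(\mathbb M(\Gamma))$. Together with Propositions \ref{prop:4.1}--\ref{prop:4.2} (which supply the standing assumptions of Section \ref{sec:3} with $C_{\mathrm{lift}} = C_\Gamma$), Theorem \ref{the:3.1} produces a unique classical solution $U\in\mathcal C^1([0,\infty);\mathbb H)\cap\mathcal C([0,\infty);\mathbb V)$ and Theorem \ref{the:3.2} its causal extension $\underline U \in \TD(\mathbb V)$. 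Extracting the first component and using $\Delta u = \nabla\cdot\dot{\bff v} = \ddot u$ recovers $u \in \TD(H^1_\Delta(\RdG))$ solving the wave equation with the stated regularity; uniqueness transfers from the abstract framework.

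For the energy bound \eqref{eq:4.10}, I would apply \eqref{boundV} with $F=0$, $\xi=0$, noting that in this concrete setup $\|U\|_\mathbb V^2 = \|U\|_\mathbb H^2 + \|\mathsf A_\star U\|_\mathbb H^2$, hence $C_1^\star = 1$ and $\|u(t)\|_{1,\RdG}\le\|U(t)\|_\mathbb V$. Substituting $\chi^{(\ell)} = (\alpha_1^{(\ell)},\alpha_2^{(\ell)},\beta_1^{(\ell-1)},\beta_2^{(\ell-1)})$ for $\ell = 0, 1, 2$ and using the worst coefficient $\max\{1,3,2\} = 3$ in \eqref{boundV} delivers the factor $3C_\Gamma$. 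For \eqref{eq:4.11}, note $\nabla u = \dot{\bff v}$ and $\nabla\cdot\nabla u = \ddot u$, so $\|\nabla u\|_{\divv,\RdG}\le\|\dot U\|_\mathbb V$. The time derivative $\dot U$ solves the same abstract problem with data $\dot\chi$ in place of $\chi$, and the strengthened hypothesis $(\alpha_i)\in W^3_+$, $(\beta_i)\in W^2_+$ reads precisely $\dot\chi \in W^2(\mathbb M(\Gamma))$. Repeating the computation for $\dot U$ yields the bound with all indices shifted up by one, i.e.\ the sum over $\ell = 1, 2, 3$.

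The only real obstacle is the bookkeeping around $\partial^{-1}$: the asymmetry between $\alpha$ and $\beta$ in both the regularity hypotheses and in the integrands of \eqref{eq:4.10}--\eqref{eq:4.11} stems entirely from the antidifferentiation used to encode the Neumann-type data into the vector variable $\bff v$. Verifying that the closed subspaces $X_h$ and $Y_h^\circ$ are preserved by $\partial^{-1}$ and recovered by differentiation is routine but deserves an explicit sentence in the final write-up.
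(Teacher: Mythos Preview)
Your proposal is correct and follows essentially the same route as the paper: reduce to the first-order pair $(u,\bff v)=(u,\partial^{-1}\nabla u)$, identify the data as $\chi=(\alpha_1,\alpha_2,\partial^{-1}\beta_1,\partial^{-1}\beta_2)\in W^2(\mathbb M(\Gamma))$, and invoke Theorems \ref{the:3.1}--\ref{the:3.2} together with Propositions \ref{prop:4.1}--\ref{prop:4.2}. The only cosmetic difference is in how the final norms are extracted: the paper uses the exact identities $\|u(t)\|_{1,\RdG}=\|(u,\dot{\bff v})(t)\|_{\mathbb H}$ and $\|\nabla u(t)\|_{\divv,\RdG}=\|(\ddot u,\dot{\bff v})(t)\|_{\mathbb H}$ and then combines \eqref{eq:8a} and \eqref{eq:8b}, whereas you bound $\|u\|_{1,\RdG}\le\|U\|_{\mathbb V}$ (respectively $\|\nabla u\|_{\divv,\RdG}=\|\dot{\bff v}\|_{\divv,\RdG}\le\|\dot U\|_{\mathbb V}$) and appeal to \eqref{boundV}. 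Since \eqref{boundV} is itself obtained by adding \eqref{eq:8a} and \eqref{eq:8b}, both routes produce the identical coefficients $(1,3,2)\le 3$ and hence the same constant $3C_\Gamma$.
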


\begin{proof}
If $u$ is the solution of \eqref{eq:4.8}, then $(u,\bff v):=(u,\partial^{-1}\nabla u)$ is the solution to
\begin{subequations}
\begin{align}
(u,\bff v)\in \TD(\mathbb V) \quad
	& \dot u=\nabla\cdot\bff v, && \dot{\bff v}=\nabla u,\\
	&\gamma^+u-\alpha_1\in X_h^\circ,  && \jump{\gamma u}-\alpha_2 \in Y_h,\\
	& \ntr^-\bff v-\partial^{-1}\beta_1 \in Y_h^\circ,  && \jump{\ntr \bff v}-\partial^{-1}\beta_2\in X_h.
\end{align}
\end{subequations}
Note that $(\alpha_1,\alpha_2,\partial^{-1}\beta_1,\partial^{-1}\beta_2)|_{(0,\infty)}\in W^2(\mathbb M(\Gamma))$. We can then apply Theorems \ref{the:3.1} and \ref{the:3.2} noticing that $\| u(t)\|_{1,\RdG}=\|(u,\dot{\bff v})(t)\|_{\mathbb H}$, which means that we need part of the bounds \eqref{eq:8a} and \eqref{eq:8b} to prove \eqref{eq:4.10}.  The bound \eqref{eq:4.11}, requiring additional data regularity follows from the following observations: (a) the operator $(\alpha_1,\alpha_2,\beta_1,\beta_2)\mapsto u$ is a convolution operator and, therefore, commutes with time differentiation; (b) $\| \nabla u(t)\|_{\divv,\RdG}=
\| (\ddot u,\dot{\bff v})(t)\|_{\mathbb H}$.  This means that we can use the bounds \eqref{eq:8} for data $\dot\Xi$ to obtain the estimate \eqref{eq:4.11}.
\end{proof}

As explained in the proof of Theorem \ref{the:4.3}, the operator $(\alpha_1,\alpha_2,\beta_1,\beta_2)\mapsto u$ is a convolution operator in the sense of operator-and-vector-valued distributions. Therefore, it commutes with differentiation and we can apply a shifting argument to show that it defines a bounded map
\[
W^k_+(\R;\Hhalf^2)\times W^{k-1}_+(\R;\Hmhalf^2)
\to \mathcal C^{k-1}_+(\R;L^2(\RdG))\cap \mathcal C^{k-2}_+(\R;H^1(\RdG))
\]
for all $k\ge 2$. Note also that \eqref{eq:4.10} can be used directly to provide bounds for the quantities
\[
\| \gamma^\pm u(t)\|_{1/2,\Gamma}, \quad \| \jump{\gamma u}(t)\|_{1/2,\Gamma},
	\quad \| \ave{\gamma u}(t)\|_{1/2,\Gamma},
\]
while \eqref{eq:4.11} can be invoked to bound
\[
\| \nd^\pm u(t)\|_{-1/2,\Gamma}, \quad \| \jump{\nd u}(t)\|_{-1/2,\Gamma},
	\quad \| \ave{\nd u}(t)\|_{-1/2,\Gamma}.
\]
In both cases, additional constants that depend only on the geometry (through bounds for the trace operator and its optimal right-inverse) will be introduced, the key point here being that all constants are independent of the choice of $X_h$ and $Y_h$.

Before we move to the next step of this paper (examining particular cases of Theorem \ref{the:4.3}) let us state a simple but relevant result that follows from a straightforward uniqueness argument.

\begin{proposition}\label{prop:4.4}
Let $\Pi_h^X:\Hmhalf\to X_h$ and $\Pi_h^Y:\Hhalf \to Y_h$ be the best approximation operators onto $X_h$ and $Y_h$ respectively. The solution of problem \eqref{eq:4.8}	with data $(\alpha_1,\alpha_2,\beta_1,\beta_2)$ is the same as the solution with data $(\alpha_1,\alpha_2-\Pi_h^Y \alpha_2,\beta_1,\beta_2-\Pi_h^X \beta_2)$. Therefore, the bounds \eqref{eq:4.10} and \eqref{eq:4.11} still hold if we substitute $\alpha_2$ by $\alpha_2-\Pi_h^Y \alpha_2$ and $\beta_2$ by  $\beta_2-\Pi_h^X \beta_2$.
\end{proposition}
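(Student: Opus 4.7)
The plan hinges on a simple observation: because $Y_h$ is a closed subspace of $\Hhalf$, the inclusion $\jump{\gamma u} - \alpha_2 \in Y_h$ is equivalent to $\jump{\gamma u} - (\alpha_2-\eta) \in Y_h$ for any $\eta \in Y_h$. Taking $\eta := \Pi_h^Y \alpha_2 \in Y_h$ shows that replacing $\alpha_2$ by $\alpha_2-\Pi_h^Y \alpha_2$ leaves the transmission condition unchanged. The same reasoning applied to $X_h \subset \Hmhalf$ shows that replacing $\beta_2$ by $\beta_2-\Pi_h^X\beta_2$ does not alter the condition $\jump{\nd u}-\beta_2\in X_h$. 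The conditions involving $\alpha_1$ and $\beta_1$ are untouched, so the two problems have the same set of solutions.

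Concretely, I would organize the proof in three short steps. First, I would verify that the modified data have the required temporal regularity. Since $\Pi_h^X$ and $\Pi_h^Y$ are bounded linear operators that act only on the space variable, they commute with time differentiation in the sense of vector-valued distributions and therefore map $W^2_+(\R;\Hhalf)$ and $W^1_+(\R;\Hmhalf)$ into themselves. Hence $\alpha_2-\Pi_h^Y\alpha_2\in W^2_+(\R;\Hhalf)$ and $\beta_2-\Pi_h^X\beta_2\in W^1_+(\R;\Hmhalf)$, so Theorem \ref{the:4.3} applies to the modified data and guarantees existence and uniqueness of a solution to that problem.

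Second, I would observe that the solution $u$ of \eqref{eq:4.8} with the original data $(\alpha_1,\alpha_2,\beta_1,\beta_2)$ automatically satisfies the problem with data $(\alpha_1,\alpha_2-\Pi_h^Y\alpha_2,\beta_1,\beta_2-\Pi_h^X\beta_2)$, by the subspace observation above. Uniqueness in Theorem \ref{the:4.3} then forces $u$ to coincide with the solution of the modified problem. Finally, applying the estimates \eqref{eq:4.10} and \eqref{eq:4.11} to the modified data (for which they are valid by the first step) yields the claimed bounds.

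I do not anticipate any real obstacle: the whole argument is a one-line remark about invariance of the transmission conditions under addition of elements of the discrete spaces, combined with the uniqueness portion of Theorem \ref{the:4.3}. The only point that deserves a brief mention is the preservation of the $W^k_+$ classes under the time-independent projections $\Pi_h^X$ and $\Pi_h^Y$, which is immediate from their boundedness.
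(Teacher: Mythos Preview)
Your argument is correct and is exactly the ``straightforward uniqueness argument'' the paper invokes: the transmission conditions $\jump{\gamma u}-\alpha_2\in Y_h$ and $\jump{\nd u}-\beta_2\in X_h$ are invariant under subtraction of elements of $Y_h$ and $X_h$ respectively, so the two data sets define the same problem, and uniqueness in Theorem~\ref{the:4.3} finishes it. Your additional remark that the projections preserve the $W^k_+$ regularity classes (needed so that Theorem~\ref{the:4.3} applies to the modified data) is a useful point the paper leaves implicit.
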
	

\paragraph{Another remark.}
Note that in the context of our abstract framework of Section \ref{sec:3} the transmission-boundary conditions in \eqref{eq:4.4} can be written as 	$\mathsf B(u,\bff v)=\xi$, where $\xi:=(\rho_1|_{X_h}, \psi_1|_{Y_h},\rho_2|_{Y_h^\circ},\psi_2|_{X_h^\circ})$ and $\| \xi\|_{\mathbb M_2}\le \| (\rho_1,\rho_2,\psi_1,\psi_2)\|_{\pm1/2}$.

\section{The examples}\label{sec:5}

This section examines different choices of $X_h$ and $Y_h$, as well as of the data functions $(\alpha_1,\alpha_2,\beta_1,\beta_2)$ in Theorem \ref{the:4.3}, to describe: retarded potentials, boundary integral operators, time domain integral equations for scattering problems,  Galerkin semidiscretizations of the latter, etc. Once we have identified these problems, we will be able to provide estimates using the general theory of Section \ref{sec:4}. We want to emphasize that some of these results had already been proved in the literature. In all cases we get improvements with respect to Laplace domain estimates. In some cases we get improvements (especially when we refer to still non-optimized approaches in \cite{DoSa:2013}, \cite{BaLaSa:2015}) or just the same estimates proved in a much simpler way (the second order in time and space analysis of \cite{Sayas:2014} requires much more additional work in the reconciliation of the estimates for a strong form of the dynamical system and its associated distributional version). Finally, we show that some `clever' choices of $X_h$ and $Y_h$ provide estimates for the forward operators, a detail that had been missed in \cite{Sayas:2014} and the papers that led to that monograph.

For ease of reference we next write the interior and exterior Dirichlet and Neumann problems for the wave equation
\begin{subequations}
\begin{align}\label{eq:5.1a}
u\in \TD(H^1_\Delta(\RdG)) \quad &
\ddot u = \Delta u \qquad  \gamma^\pm  u= \alpha^\pm \\
\label{eq:5.1b}
u\in \TD(H^1_\Delta(\RdG)) \quad &
\ddot u=\Delta u \qquad   \nd^\pm u=\beta^\pm.
\end{align}
\end{subequations}
From this moment on $c_\Gamma$ is a generic constant independent of the choice of the spaces $X_h$ and $Y_h$. It typically includes the influence of the constant $C_\Gamma$ of Proposition \ref{prop:4.2} and of the trace operators $\gamma^\pm: H^1(\RdG)\to \Hhalf$, $\ntr^\pm:\Hdiv{\RdG}\to \Hmhalf$. The exterior Dirichlet-to-Neumann map is the operator $\alpha^+\to\nd^+u$, where $u$ solves \eqref{eq:5.1a} (the value of $\alpha^-$ is not relevant). Definitions for the interior DtN and exterior-interior NtD operators follow likewise. To shorten notation we will write, for instance, $(\frac12+\CK)*\beta:=\tfrac12\beta+\CK*\beta$. Properly speaking, the scalar factor is multiplying $\delta_0\otimes \mathcal I$, where $\mathcal I$ is the associated identity operator (in this case in $\Hhalf$), $\delta_0$ is the scalar time-domain Dirac delta distribution and $\otimes$ denotes the tensor product of a scalar distribution with an operator that does not depend on time.

In all the coming bounds we will use the cummulative seminorm
\[
H_2(f,t;X):=\sum_{\ell=0}^2 \int_0^t \| f^{(\ell)}(\tau)\|_X\mathrm d\tau.
\]

\subsection{Continuous operators}\label{sec:51}

\paragraph{Potentials and integral operators.}
If we choose $X_h=\{0\}$ and $Y_h=\{0\}$ and data $(\alpha_1,\alpha_2,\beta_1,\beta_2)=(\times, \varphi,\times,\lambda)$ (the components $\alpha_1$ and $\beta_1$ of the data set are ignored by void transmission conditions in \eqref{eq:4.8}, which we denote by writing the $\times$ symbol), then the solution of \eqref{eq:4.8} is
$u=\CS*\lambda-\CD*\varphi$
and
\[
\ave{\gamma u}=\CV*\lambda-\CK*\varphi, 
\qquad
\ave{\nd u}=\CK^t*\lambda+\CW*\varphi.
\]
Estimates for the single layer potential and associated integral operators follow from Theorem \ref{the:4.3}:
\begin{alignat}{6}\label{eq:cts1}
\nonumber
\lambda\in W^1_+(\R;\Hmhalf), \qquad
	& \CS *\lambda \in \mathcal C(\R;H^1(\Rd)), \\
\nonumber
	& \| (\CS*\lambda)(t)\|_{1,\Rd}  \le\, c_\Gamma H_2(\partial^{-1}\lambda,t;\Hmhalf), \\
	& \CV *\lambda \in \mathcal C(\R;\Hhalf),\\
\nonumber
	& \| (\CV*\lambda)(t)\|_{1/2,\Gamma}  \le\, c_\Gamma H_2(\partial^{-1}\lambda,t;\Hmhalf),& \\
\nonumber
\lambda\in W^2_+(\R;\Hmhalf),\qquad 
	& \CK^t *\lambda \in \mathcal C(\R;\Hmhalf), \\
\nonumber
	& \| (\CK^t*\lambda)(t)\|_{-1/2,\Gamma}  \le\, c_\Gamma H_2(\lambda,t;\Hmhalf).
\end{alignat}
Similar results can be found for the double layer potential and associated integral operators:
\begin{alignat}{6}\label{eq:cts2}
\nonumber
\varphi\in W^2_+(\R;\Hhalf),\qquad
	& \CD *\varphi \in \mathcal C(\R;H^1(\RdG)), \\
\nonumber
	& \| (\CD*\varphi)(t)\|_{1,\RdG}  \le\, c_\Gamma H_2(\varphi,t;\Hhalf), \\
	& \CK *\varphi \in \mathcal C(\R;\Hhalf),\\
\nonumber
	& \| (\CK*\varphi)(t)\|_{1/2,\Gamma}  \le\, c_\Gamma H_2(\varphi,t;\Hhalf),& \\
\nonumber
\varphi\in W^3_+(\R;\Hhalf),\qquad 
	& \CW *\varphi \in \mathcal C(\R;\Hmhalf),\\
\nonumber
	& \| (\CW *\varphi)(t)\|_{-1/2,\Gamma}  \le\, c_\Gamma H_2(\dot\varphi,t;\Hhalf).
\end{alignat}

\paragraph{Integral formulations for Dirichlet problems.}
 Let $X_h=\Hmhalf$ and $Y_h=\{0\}$. The solution for data $(\alpha_1,\alpha_2,\times,\times)$ is
\[
u=\CS*\CV^{-1}*\alpha_1+ (\CS*\CV^{-1}*(\tfrac12 + \CK)-\CD)*\alpha_2.
\]
Note that
\[
\jump{\nd u}=\CV^{-1}*\alpha_1 + \CV^{-1}*(\tfrac12+\CK)*\alpha_2
\]
and $\gamma^+ u=\alpha_1$, $\gamma^- u=\alpha_1+\alpha_2$. The data $(\alpha,0,\times,\times)$ correspond to a single layer representation $u=\CS*\lambda$ of the solution to the Dirichlet problem \eqref{eq:5.1a} with $\alpha^\pm=\alpha$ and $\CV*\lambda=\alpha$. The data $(0,\alpha,\times,\times)$ correspond to a direct representation of the solution of \eqref{eq:5.1a} with $\alpha^+=0$, $\alpha^-=\alpha$:
\[
u=\CS*\lambda-\CD*\alpha, \qquad \CV*\lambda=(\tfrac12+\CK)*\alpha, 
\qquad \lambda=\nd^- u.
\]
In particular we have an estimate of the interior Dirichlet-to-Neumann map. We next collect some estimates for both problems.
\begin{alignat}{6}\label{eq:cts3}
\alpha\in W^2_+(\R;\Hhalf),\qquad
\nonumber
	& \CS*\CV^{-1}*\alpha \in \mathcal C(\R;H^1(\R^d)), \\
\nonumber
	& \| (\CS*\CV^{-1}*\alpha)(t)\|_{1,\R^d} \le c_\Gamma H_2(\alpha,t;\Hhalf),\\
	& u:=(\CS*\CV^{-1}*(\tfrac12 + \CK)-\CD)*\alpha \in \mathcal C(\R;H^1(\Omega_-)), \\
\nonumber
	& \| u(t)\|_{1,\Omega_-} \le c_\Gamma H_2(\alpha,t;\Hhalf),\\
\nonumber
\alpha\in W^3_+(\R;\Hhalf),\qquad
	&\CV^{-1}*\alpha \in \mathcal C(\R;\Hmhalf), \\
\nonumber
	& \| (\CV^{-1}*\alpha)(t)\|_{-1/2,\Gamma}\le c_\Gamma H_2(\dot\alpha,t;\Hhalf),\\
	&\lambda:=\mathrm{DtN}^-(\alpha)=\CV^{-1}*(\tfrac12+\CK)*\alpha \in \mathcal C(\R;\Hmhalf),\\
\nonumber
	&\|\lambda(t)\|_{-1/2,\Gamma}\le c_\Gamma H_2(\dot\alpha,t;\Hhalf).
\end{alignat}
If we solve \eqref{eq:4.8} with the given choice of spaces and data $(\alpha,-\alpha,\times,\times)$ we solve the Dirichlet problem \eqref{eq:5.1a} with data $\alpha^+=\alpha$ and $\alpha^-=0$. Therefore, we also have an estimate for the exterior DtN operator.

\paragraph{Improvements.} The estimates in \eqref{eq:cts1} and \eqref{eq:cts2} improve on the estimates in \cite{DoSa:2013} by removing the dependence on time of some of the constants in the energy estimates.   In addition to the sharper bounds found here, these results do not require the detailed cut-off process that was necessary for that analysis.  Estimates for the Dirichlet problem were previously derived in \cite{BaLaSa:2015}.  The present analysis improves on these bounds in a number of ways: the derivation is simpler, the bounds \eqref{eq:cts3} are sharper, and we require less regularity of the data in the time variable to prove our results.

\paragraph{Integral formulations for Neumann problems.}
Take now $X_h=\{0\}$ and $Y_h=\Hhalf$. The solution of \eqref{eq:4.8} with data $(\times,\times,\beta_1,\beta_2)$ is
\[
u=-\CD*\CW^{-1}*\beta_1+(\CS+\CD*\CW^{-1}*(\tfrac12+\CK^t))*\beta_2
\]
and, therefore,	
\[
\jump{\gamma u}=\CW^{-1}*\beta_1+\CW^{-1}*(\tfrac12+\CK^t)*\beta_2,
\]
while $\nd^- u=\beta_1$, $\nd^+ u=\beta_1-\beta_2$. The particular case $(\times,\times,\beta,0)$ corresponds to a double layer representation $u=-\CD*\varphi$ of the solution of the Neumann problem \eqref{eq:5.1b} with data $\beta^\pm=\beta$, and $\varphi$ computed as the solution of $\CW*\varphi=\beta$. The data $(\times,\times,0,-\beta)$ provide a direct representation of the solution of the exterior Neumann problem with vanishing interior data ($\beta^+=\beta$, $\beta^-=0$):
\[
u=-\CS*\beta+\CD*\varphi, \qquad 
\CW*\varphi=-(\tfrac12+\CK^t)*\beta,\qquad
\varphi=\gamma^+ u.
\]
Here are some associated estimates:
\begin{alignat*}{6}
\beta\in W^2_+(\R;\Hhalf) \qquad
	& \CD*\CW^{-1}*\beta\in \mathcal C(\R;H^1(\RdG)),\\
	& \| (\CD*\CW^{-1}*\beta)(t)\|_{1,\RdG}\le c_\Gamma H_2(\beta,t;\Hhalf),\\
	& \CW^{-1}*\beta\in \mathcal C(\R;\Hhalf),\\
	& \|  (\CW^{-1}*\beta)(t)\|_{1/2,\Gamma} \le c_\Gamma H_2(\beta,t;\Hhalf),\\
	& u=(\CS+\CD*\CW^{-1}*(\tfrac12+\CK^t))*\beta)\in \mathcal C(\R;H^1(\Omega_+)),\\
	& \| u(t)\|_{1,\Omega_+} \le  c_\Gamma H_2(\beta,t;\Hhalf),\\
	& \varphi:=\mathrm{NtD}^+(\beta) =\CW^{-1}*(\tfrac12+\CK^t)*\beta,\\
	& \|\varphi(t)\|_{1/2,\Gamma}\le c_\Gamma H_2(\beta,t;\Hhalf).
\end{alignat*}
For a direct formulation of the interior Neumann problem we use $(\times,\times,\beta,\beta)$. 

\subsection{Semidiscretization of integral equations}

In this section we derive results about semidiscretization in space of the equations in Sections \ref{sec:51}. From this moment on, we will not spell out the regularity requirements on the problem data. They will be assumed to be such that the right-hand side of the bounds is finite.

\paragraph{Equations for the Dirichlet problem.}
Let $X_h$ be finite dimensional and $Y_h=\{0\}$. The corresponding transmission conditions are
\begin{equation}\label{eq:5.2}
\gamma^+ u-\alpha_1\in X_h^\circ, \qquad
	\jump{\gamma u}=\alpha_2, \qquad
	\jump{\nd u}-\beta_2 \in X_h,
\end{equation}
with an additional void equation associated to the other boundary data: $\nd^- u-\beta_1\in \Hmhalf$. The data $(\alpha,0,\times,0)$ correspond to solving the semidiscrete equations:
\begin{equation}\label{eq:5.3}
\lambda^h \in X_h, \qquad \CV*\lambda^h-\alpha\in X_h^\circ, \qquad u^h:=\CS*\lambda^h.
\end{equation}
We can bound
\begin{equation}\label{eq:5.4}
\| u^h(t)\|_{1,\RdG}\le c_\Gamma H_2(\alpha,t;\Hhalf),
	\qquad
	\|\lambda^h(t)\|_{-1/2,\Gamma} \le c_\Gamma H_2(\dot\alpha,t;\Hhalf).
\end{equation}
This is a Galerkin semidiscretization of
\begin{equation}\label{eq:5.4bis}
\CV*\lambda=\alpha, \qquad u=\CS*\lambda.
\end{equation}
The data $(0,\alpha,\times,0)$ correspond to
\begin{equation}\label{eq:5.5}
\lambda^h\in X_h, \qquad \CV*\lambda^h-(\tfrac12+\CK)*\alpha\in X_h^\circ,
	\qquad u^h=\CS*\lambda^h -\CD*\alpha,
\end{equation}
and yields bounds identical to \eqref{eq:5.4}. This is a Galerkin semidiscretization of
\begin{equation}\label{eq:5.5bis}
\CV*\lambda=(\tfrac12+\CK)*\alpha, \qquad u=\CS*\lambda-\CD*\alpha.
\end{equation}
Data $(0,0,\times,\lambda)$ produces a semidiscretization-in-space bound for both \eqref{eq:5.3} and \eqref{eq:5.5}. 
Let $\wt u$ is the solution of \eqref{eq:4.8} with this choice of space and data and let $\lambda^h:=\lambda-\jump{\nd \wt u}$, then 
\[
\lambda^h\in X_h, \qquad
	\CV*(\lambda^h-\lambda)\in X_h^\circ, \qquad
 	\wt u=\CS*(\lambda-\lambda^h).
\]
We have two scenarios covered. In the first one, we are approximating \eqref{eq:5.4bis} by \eqref{eq:5.3}. In the second one, we are approximating  \eqref{eq:5.5bis} by \eqref{eq:5.5}. In both cases $\wt u=u-u^h$ and we can estimate (recall Proposition \ref{prop:4.4})
\begin{subequations}\label{eq:5.6}
\begin{align}
\| u(t)-u^h(t)\|_{1,\RdG} & \le c_\Gamma H_2(\partial^{-1}\lambda-\Pi_h^X\partial^{-1}\lambda,t;\Hmhalf),\\
\|\lambda(t)-\lambda^h(t)\|_{-1/2,\Gamma}& \le c_\Gamma H_2(\lambda-\Pi_h^X\lambda,t;\Hmhalf).
\end{align}
\end{subequations}
The bounds \eqref{eq:5.4} are {\em stability} estimates for Galerkin semidiscretization of two different equations associated to the convolution operator $\lambda\mapsto\CV*\lambda$, while \eqref{eq:5.6} are {\em error estimates} for those semidiscretizations.

\paragraph{Equations for the Neumann problem.}
Let $X_h=\{0\}$ and $Y_h$ be finite dimensional. The associated non-void transmission conditions are
\[
\jump{\gamma u}-\alpha_2\in Y_h, \qquad \nd^- u-\beta_1\in Y_h^\circ, \qquad \jump{\nd u}=\beta_2.
\]
With data $(\times,0,\beta,0)$ we are solving
\begin{equation}\label{eq:5.9}
\varphi^h\in Y_h,\qquad
	\CW*\varphi^h-\beta\in Y_h^\circ, \qquad u=-\CD*\varphi^h,
\end{equation}
as an approximation of the indirect formulation of the interior-exterior Neumann problem (see Section \ref{sec:51})
\begin{equation}\label{eq:5.10}
\CW*\varphi=\beta, \qquad u=-\CD*\varphi.
\end{equation}
With data $(\times,0,0,\beta)$ we are instead solving
\begin{equation}\label{eq:5.11}
\varphi^h\in Y_h, \qquad \CW*\varphi^h-(\tfrac12+\CK^t)*\beta\in Y_h^\circ,
	\qquad u=\CS*\beta+\CD*\varphi^h
\end{equation}
as an approximation to the direct formulation of the exterior Dirichlet problem ($\varphi=\gamma^+ u$)
\begin{equation}\label{eq:5.12}
\CW*\varphi=(\tfrac12+\CK^t)*\beta,\qquad
	u=\CS*\beta+\CD*\varphi.
\end{equation}
In both cases, we derive stability estimates
\[
\| u^h(t)\|_{1,\RdG}+\|\varphi^h(t)\|_{1/2,\Gamma}\le c_\Gamma H_2(\partial^{-1}\beta,t;\Hmhalf).
\]
If the solution with data $(\times,\varphi,0,0)$ is denoted $\wt u$ and $\varphi^h:=\jump{\gamma\wt u}+\varphi$, then $\wt u=\CD*(\varphi-\varphi^h)$ and we are proving error estimates for the approximation of \eqref{eq:5.10}  by \eqref{eq:5.9} and of \eqref{eq:5.12} by \eqref{eq:5.11}:
\[
\| u(t)-u^h(t)\|_{1,\RdG}+\|\varphi(t)-\varphi^h(t)\|_{1/2,\Gamma}
	\le c_\Gamma H_2(\varphi-\Pi_h^Y\varphi,t;\Hhalf).
\]

\subsection{Symmetric Galerkin solvers}\label{sec:5.3}

In this section we outline what kind of problems we solve when we take discrete spaces $X_h$ and $Y_h$ or, in the limit, $X_h=\Hmhalf$ and $Y_h=\Hhalf$. 
With data $(\alpha^+,0,\beta^-,0)$, we have $\varphi^h:=\jump{\gamma u}\in Y_h$, $\lambda^h:=\jump{\nd u}\in X_h$, and we can represent $u=\CS*\lambda^h-\CD*\varphi^h$. Therefore
\[
\gamma^+ u=\CV*\lambda^h-(\tfrac12+\CK)*\varphi^h,
\qquad
\nd^- u=(\tfrac12+\CK^t)*\lambda^h+\CW*\varphi^h. 
\]
We will give an interpretation of what $(u,\varphi^h,\lambda^h)$ is later on. At this stage we can state the stability estimates
\begin{align*}
\| \varphi^h(t)\|_{1/2,\Gamma}
+ \| u(t)\|_{1,\RdG} & \le c_\Gamma \big( H_2(\alpha^+,t;\Hhalf)+H_2(\partial^{-1}\beta^-,t;\Hmhalf)\big),\\
\| \lambda^h(t)\|_{-1/2,\Gamma}
	&\le c_\Gamma \big( H_2(\dot\alpha^+,t;\Hhalf)+H_2(\beta^-,t;\Hmhalf)\big).
\end{align*}

\paragraph{Symmetric formulation for Dirichlet problem.}
The data $(\alpha^+,0,0,0)$ provide the semidiscrete system
\[
\CV*\lambda^h-(\tfrac12+\CK)*\varphi^h-\alpha^+\in X_h^\circ,
\qquad
(\tfrac12+\CK^t)*\lambda^h+\CW*\varphi^h \in Y_h^\circ,
\]
which is the $X_h\times Y_h$ Galerkin semidiscretization of the symmetric formulation
\begin{equation}\label{eq:5.13}
\left[\begin{array}{cc} \CV  & -\tfrac12-\CK \\ \tfrac12+\CK^t & \CW \end{array}\right]
*\left[\begin{array}{c} \lambda \\ \varphi \end{array}\right]=
\left[\begin{array}{c} \alpha^+ \\ 0 \end{array}\right].
\end{equation}
The system \eqref{eq:5.13} is a realization of the symmetric form for the exterior Dirichlet-to-Neumann (Steklov-Poincar\'e) operator
\begin{equation}\label{eq:5.14}
\big(\CV+(\tfrac12+\CK)*\CW^{-1}*(\tfrac12+\CK^t)\big)*\lambda=\alpha^+,
\end{equation}
via the introduction of the artificial variable $\varphi=-\CW^{-1}*(\tfrac12+\CK^t)*\lambda$, which, in the continuous case, is a copy of $-\alpha^+$. The exact system \eqref{eq:5.13} is recovered when $X_h=\Hmhalf$ and $Y_h=\Hhalf$. When $X_h$ is finite dimensional and $Y_h=\Hhalf$, we obtain a non-practicable method consisting of using an $X_h$ Galerkin semidiscretization of \eqref{eq:5.14}. 

\paragraph{Symmetric formulation for Neumann problem.} The data $(0,0,\beta^-,0)$ correspond to a semidiscretization of
\begin{equation}\label{eq:5.15}
\left[\begin{array}{cc} \CV  & -\tfrac12-\CK \\ \tfrac12+\CK^t & \CW \end{array}\right]
*\left[\begin{array}{c} \lambda \\ \varphi \end{array}\right]=
\left[\begin{array}{c} 0 \\ \beta^- \end{array}\right],
\end{equation}
which, inverting the first equation can be reduced to
\begin{equation}
\big( \CW+(\tfrac12+\CK^t)*\CV^{-1}*(\tfrac12+\CK)\big)*\varphi=\beta^-.
\end{equation}
This is the Steklov-Poincar\'e formula for the Neumann-to-Dirichlet operator. 

\paragraph{Associated error operators.} Let $u$ be the solution to \eqref{eq:4.8} with data $(\alpha^+,0,\beta^-,0)$ for $X_h=\Hmhalf$ and $Y_h=\Hhalf$. Let $\varphi:=\jump{\gamma u}$ and $\lambda:=\jump{\nd u}$. We now solve again \eqref{eq:4.8} with the same data but changing the spaces $X_h$ and $Y_h$ to be finite dimensional: we denote its solution by $u^h$ and define $\varphi^h:=\jump{\gamma u^h}$, $\lambda^h:=\jump{\nd u^h}$. The errors between exact and semidiscrete solutions can be studied by applying Theorem \ref{the:4.3} (and Proposition \ref{prop:4.4}) to data $(0,\varphi,0,\lambda)$ with the discrete spaces $X_h$ and $Y_h$. We then have bounds for the errors
\begin{align*}
& \| u(t)-u^h(t)\|_{1,\RdG}
+\| \varphi(t)-\varphi^h(t)\|_{1/2,\Gamma} \\
& \hspace{1cm} \le c_\Gamma \big( H_2(\varphi-\Pi_h^Y\varphi,t;\Hhalf)+H_2(\partial^{-1}\lambda-\Pi_h^X\partial^{-1}\lambda,t;\Hmhalf)\big),
\end{align*}
and
\[
 \| \lambda(t)-\lambda^h(t)\|_{-1/2,\Gamma} \le c_\Gamma \big( H_2(\dot\varphi-\Pi_h^Y\dot\varphi,t;\Hhalf)+H_2(\lambda-\Pi_h^X\lambda,t;\Hmhalf)\big).
\]

\subsection{Further Applications}

\paragraph{Mixed boundary conditions.}

Let $\Gamma$ be divided into two relatively open sets $\Gamma_D$ and $\Gamma_N$ such that $\Gamma_D\cap \Gamma_N=\emptyset$ and $\overline\Gamma_D\cup\overline\Gamma_N=\Gamma$. We consider the space
\[
H^{1/2}(\Gamma_D):=\{ \varphi|_{\Gamma_D}\,:\, \varphi\in H^{1/2}(\Gamma)\}.
\]
This space can be endowed with the image norm of the restriction operator $R:H^{1/2}(\Gamma)\to H^{1/2}(\Gamma_D)$ or with any other equivalent norm. We define
\[
\wt H^{1/2}(\Gamma_N):=\mathrm{Ker}\,R=\{ \varphi\in H^{1/2}(\Gamma)\,:\, \varphi|_{\Gamma_D}=0\}. 
\]
Since $R$ is bounded and surjective, the adjoint operator $R^*:(H^{1/2}(\Gamma_D))^* \to H^{-1/2}(\Gamma)$ is injective and has closed range. We then define
\[
\wt H^{-1/2}(\Gamma_D):=\mathrm{Range}\,R^*=(\mathrm{Ker}\,R)^\circ=\wt H^{1/2}(\Gamma_D)^\circ.
\]
This set is isomorphic to $H^{1/2}(\Gamma_D)^*$. Formally speaking, elements of $\wt H^{-1/2}(\Gamma_D)$ vanish on $\Gamma_N$. Consider now that we have an extension of the Dirichlet data and of the Neumann data, so that we have at our disposal elements $(\alpha,\beta)\in \TD(\Hhalf\times \Hmhalf)$. We consider an exterior solution of the wave equation, extended by zero to the interior domain, we can then write the mixed boundary conditions as
\[
\gamma^+ u-\alpha \in \wt H^{1/2}(\Gamma_N), \qquad \nd^+ u-\beta\in \wt H^{-1/2}(\Gamma_D),
\]
or, taking into account the vanishing value of $u$ in the interior domain
\begin{subequations}
\begin{equation}
\jump{\gamma u}+\alpha \in \wt H^{1/2}(\Gamma_N), 
\qquad
\jump{\nd u}+\beta\in \wt H^{-1/2}(\Gamma_D).
\end{equation}
Defining $\varphi:=\gamma^+ u-\alpha\in \wt H^{1/2}(\Gamma_N)$ and $\lambda:=\nd^+ u-\beta\in \wt H^{-1/2}(\Gamma_D)$, we can represent the solution using Kirchhoff's formula
\[
u=-\CS*(\beta+\lambda)+\CD*(\alpha+\varphi)
\]
and note that 
\[
\left[\begin{array}{c}-\gamma^+ u+\alpha \\ -\nd^- u \end{array}\right]
=\left[\begin{array}{cc} \CV & -\tfrac12-\CK \\ \tfrac12+\CK^t & \CW \end{array}\right]
*\left[\begin{array}{c} \beta+\lambda  \\ \alpha+\varphi\end{array}\right] 
+\left[\begin{array}{c} \alpha \\ 0 \end{array}\right]
=\left[\begin{array}{c} -\varphi \\ 0 \end{array}\right],
\]
which implies
\begin{equation}
\gamma^+ u-\alpha \in \wt H^{1/2}(\Gamma_N)=\wt H^{-1/2}(\Gamma_D)^\circ,
\qquad
\nd^- u \in \wt H^{1/2}(\Gamma_N)^\circ.
\end{equation}
\end{subequations}
This means that the choice of spaces $X_h=\wt H^{-1/2}(\Gamma_D)$ and $Y_h=\wt H^{1/2}(\Gamma_N)$ allows us to recover the transmission conditions of problem \eqref{eq:4.8} with data $(\alpha,-\alpha,0,-\beta)$. Note that in this case the spaces $X_h$ and $Y_h$ are related by $X_h^\circ=Y_h$ and $Y_h^\circ=X_h$.

If we take finite dimensional subspaces $X_h\subset \wt H^{-1/2}(\Gamma_D)$ and $Y_h\subset\wt H^{1/2}(\Gamma_N)$ the theory covers for the semidiscrete Galerkin scheme
\[
(\lambda^h,\varphi^h)\in X_h\times Y_h,
\qquad
\left[\begin{array}{cc} \CV & -\tfrac12-\CK \\ \tfrac12+\CK^t & \CW \end{array}\right]
*\left[\begin{array}{c} \beta+\lambda^h  \\ \alpha+\varphi^h\end{array}\right]
+\left[\begin{array}{c} \alpha \\ 0 \end{array}\right]
\in X_h^\circ\times Y_h^\circ,
\]
followed by the potential reconstruction
\[
u^h:=-\CS*(\beta+\lambda^h)+\CD*(\alpha+\varphi^h).
\]
The stability and semidiscretization error estimates of Section \ref{sec:5.3} still hold.

\paragraph{Dirichlet and Neumann screens.} We can understand a screen $\Gscr$ as any geometric set in $\R^d$ that can be completed to a closed boundary $\Gamma$ of a Lipschitz domain $\Omega_-$. Let us go back to the notation of the above paragraph with $\Gamma_D=\Gscr$ ($\Gamma_N$ is the part we have added to $\Gscr$ to create $\Gamma$). If we take $X_h=\wt H^{-1/2}(\Gscr)$ and $Y_h=\{0\}$, the data $(0,0,\times,\lambda)$ correspond to studying the single layer potential $u=\CS*\lambda$ for $\lambda\in \wt H^{-1/2}(\Gscr)$ and its trace $\CV*\lambda$. Properly speaking, the kind of bounds we obtain for $u$ are given in $H^1(\RdG)$. However, since $\jump{\gamma u}=0$ and $\jump{\nd u}\in \wt H^{-1/2}(\Gscr)$, these bounds are automatically extended to $H^1(\R^d\setminus\Gscr)$. Similarly, we can understand that the actual single layer operator on the screen is defined by $R(\CV*\lambda)\in \TD(H^{1/2}(\Gscr))$, so that it is valued on a space of functions defined only on $\Gscr$. 

The data $(\alpha,0,\times,0)$ correspond to solving the Dirichlet problem on the screen using a single layer potential representation
\begin{equation}\label{eq:5.18}
\lambda\in \TD(\wt H^{-1/2}(\Gscr)) \qquad R(\CV*\lambda-\alpha)=0, 
\qquad u=\CS*\lambda.
\end{equation}
The choice of a finite dimensional space $X_h$ provides Galerkin semidiscretizations of \eqref{eq:5.18} and the associated semidiscretization error analysis. We emphasize that the general bounds given in Section \ref{sec:5.3} cover all these new situations.

The Neumann case can be studied by letting $X_h=\{0\}$ and $Y_h$ be either $\wt H^{1/2}(\Gscr)$ or its finite dimensional subspace. With this choice of spaces we are dealing with a screen on which we define a double layer potential and two-sided Neumann boundary conditions can be imposed.

\section{The final words}

Let us finally point out some simple extensions and applications of the techniques developed in this paper:
\begin{itemize}
\item The joint treatment of many problems (forward operators, solution operators, semidiscrete solution operators, screen problems) can be also used in the Laplace domain analysis, thus collecting many existing results as particular choices of spaces in a general transmission problem (`parameterized' in the two spaces $X_h$ and $Y_h$).
\item The application of these techniques to BEM-FEM coupled modeling of scattering by non-homogeneous obstacles is being explored in \cite{HaSa:Sub}.
\item The results on scattering by penetrable homogeneous obstacles proved in \cite{QiSa:2014} can be reproved with the techniques of this paper. The techniques are equivalent to those used in \cite{QiSa:2014} and no improvement in the bounds is obtained. The proofs with this first order equation approach are simpler though.
\item All the results in this paper can be extended verbatim to the elastic wave equation.
\item The application of these ideas to Maxwell equations actually precedes this paper \cite{QiSa:2015}, given the fact that the second order equation ideas \cite{Sayas:2013d, Sayas:2014} seem not to apply to the functional space setting of the layer potentials for electromagnetism.
\end{itemize}
The global transmission problem of Section \ref{sec:4} makes an effort in collecting all problems under one roof (a {\em one ring to rule them all} of sorts), emphasizing that the analytical tools of many apparently different situations follow a clear pattern. In a way, we hope this paper will guide and simplify future endeavors in the analysis of time domain integral equations. 

\bibliographystyle{abbrv}
\bibliography{biblio}

\end{document}